\newcommand{\EE}{\mathbb{E}}
\newcommand{\PP}{\mathbb{P}}
\newcommand{\RR}{\mathbb{R}}
\newcommand{\NN}{\mathbb{N}}
\newcommand{\eps}{\varepsilon}
\newcommand{\defas}{\coloneqq}
\newcommand{\safed}{\eqqcolon}
\newcommand{\m}[1]{\mathbb{#1}}
\newtheorem{theorem}{Theorem}
\newtheorem{proposition}{Proposition}
\newtheorem{lemma}{Lemma}
\newtheorem{remark}{Remark}
\newtheorem{example}{Example}
\theoremstyle{definition}
\newtheorem{definition}{Definition}
\begin{document}

\title{ 
    Zero-sum Random Games on Directed Graphs
}

\author{ 
    Luc Attia\thanks{Paris Dauphine University, France.}
    \and
    Lyuben Lichev\thanks{Univ.Jean Monnet, Saint-Etienne, France and Institute of Mathematics and Informatics, Bulgarian Academy of Science, Sofia, Bulgaria.}
    \and 
    Dieter Mitsche\thanks{Univ.~Jean Monnet, Saint-Etienne, France and Institute for Mathematical and Computational Engineering, Pontif\'icia Universidad Cat\'olica, Chile.}
    \and
    Raimundo Saona\thanks{Institute of Science and Technology Austria, Austria.}
    \and 
    Bruno Ziliotto\thanks{CEREMADE, CNRS, Paris Dauphine University, France.}
}

\date{\today}

\maketitle

\begin{abstract}
    This paper considers a class of two-player zero-sum games on directed graphs whose vertices are equipped with random payoffs of bounded support known by both players.
    Starting from a fixed vertex, players take turns to move a token along the edges of the graph. 
    On the one hand, for acyclic directed graphs of bounded degree and sub-exponential expansion, we show that the value of the game converges almost surely to a constant at an exponential rate dominated in terms of the expansion.
    On the other hand, for the infinite $d$-ary tree that does not fall into the previous class of graphs, we show convergence at a double-exponential rate in terms of the expansion.
\end{abstract}

\section{Introduction}
The following class of two-player zero-sum games has been introduced in \cite{garnier2021PercolationGames} under the name of \textit{percolation games}. 
Each vertex of $\m{Z}^d$ is equipped with a real-valued random variable called \textit{payoff}.
The realization of all these variables is known to the players at the start of the game. 
Initially, a token is placed at some vertex of $\m{Z}^d$ and, at every stage, each player chooses an action.
Then, the token is moved consecutively by Player 1 and by Player 2 according to the chosen actions. 
At the end of every stage, Player 2 pays to Player 1 the payoff of the corresponding vertex. 
Player 1 aims at maximizing the mean payoff over $n$ stages, while Player 2 aims at minimizing the same quantity, and the value of that game is denoted by $V_n$. 
The main result of \cite{garnier2021PercolationGames} shows that, when payoffs are bounded i.i.d.\ random variables and the game is \textit{oriented} (meaning that, at every move, the projection of the position of the token onto some fixed axis increases), then $(V_n)$ converges almost surely (a.s.) to a constant. 



The class of percolation games is motivated by various reasons.
First, it relates to the rich game-theoretic literature on the existence of a limit value in dynamic games (see for example the surveys \cite{LS15, SZ16}). This topic is particularly delicate for dynamic games with infinite state space, where general positive results are scarce (see~\cite{garrec19, LR20, ziliotto21} for some recent advances, and \cite{ziliotto13} for several counterexamples).
Second, percolation games connect to the important topic of stochastic homogenization of partial differential equations, for example, see~\cite[Section 4]{garnier2021PercolationGames} for results on Hamilton-Jacobi equations.
Moreover, from a probabilistic point of view, percolation games combine aspects of first-passage and last-passage percolation \cite{ADH17}, and a related model of Probabilistic Finite Automaton has been studied in \cite{HMM19,BKPR23}. 
Finally, it contributes to the growing literature on random games (see e.g.\ \cite{FPS23,ARY21,ACSZ21,HJMP21}). 

In this paper, we consider a model with a structure similar to that of a percolation game but where the state space is not restricted to be the graph $\m{Z}^d$. We introduce \textit{directed games} where the state space of the game is the vertex set of an acyclic directed graph $\Gamma$ where players move the token along the edges of $\Gamma$ respecting their orientation.
On the one hand, under certain assumptions of transitivity and sub-exponential growth of $\Gamma$, 
we prove that $V_n$ is exponentially concentrated around a given deterministic value (so, in particular, converges to that value a.s.) and relate the convergence rate to the speed of growth of the graph.
On the other hand, we consider the infinite $d$-ary tree with $d \geq 2$ where each vertex has exactly $d$ children and every edge is directed from the parent to the child. 
These graphs do not belong to the previous class of transient games due to their exponential growth. 
In this case, we show a stronger double-exponential concentration of $V_n$ around its expected value.


\section{Preliminaries}

A \emph{directed game} is a dynamical system that consists of a locally finite directed graph $\Gamma$ with infinite countable vertex set $Z$ called the \emph{state space}, an initial state $z_0 \in Z$ and a collection of independent and identically distributed (i.i.d.)\ random variables $(G_z)_{z\in Z}$ called \emph{payoffs}. 
We assume that $\Gamma$ has uniformly bounded degrees and contains neither directed cycles nor vertices with out-degree~0. 
The game is played by two players called Player~$1$ and Player~$2$. 
At the start of the game, the payoffs $(G_{z})_{z \in Z}$ are sampled and presented to both players, who thus obtain perfect information. 
Then, a token is placed at the initial state $z_0$.
For every integer $i\ge 0$, given that the token is positioned at a state $z\in Z$ before stage $i+1$, the following happens:
\begin{itemize}
    \item if $i$ is even, Player $1$ moves the token to an out-neighbor $z'$ of $z$ in $\Gamma$,
    \item if $i$ is odd, Player $2$ moves the token to an out-neighbor $z'$ of $z$ in $\Gamma$,
    \item Player 1 receives the payoff $G_{z'}$ from Player $2$.
\end{itemize}

Note that, unlike the setting in~\cite{garnier2021PercolationGames}, only one of the players performs a move at each round.
We are mostly interested in the \emph{$n$-stage game} consisting of the first $n$ stages for (typically large) integers $n$.

A \emph{strategy} of Player $1$ (resp. Player $2$) is a function $\sigma \colon \bigcup_{m \geq 0} Z^{2m+1} \rightarrow Z$ (resp. $\tau \colon \bigcup_{m \geq 0} Z^{2m+2} \rightarrow Z$) with the property that, for every $m \geq 0$ and $(z_0,z_1,\dots,z_{2m+1})\in Z^{2m+2}$, $\Gamma$ contains the edge from $z_{2m}$ to $\sigma(z_0,\dots,z_{2m})$ (resp. from $z_{2m+1}$ to $\tau(z_0,\dots,z_{2m+1})$). 
We denote by $\Sigma$ the collection of all strategies for Player $1$ and by $\mathcal{T}$ the collection of all strategies for Player $2$.

Given a pair of strategies $(\sigma, \tau)\in \Sigma\times \mathcal{T}$, we define inductively the \emph{trajectory of the token} by setting $z_{2i+1} \defas \sigma(z_0,\dots,z_{2i})$ and $z_{2i+2} \defas \tau(z_0,\dots,z_{2i+1})$ for every $i\ge 0$. 
This allows us to define the \emph{$n$-stage payoff function} $\gamma^{z_0}_n \colon \Sigma \times \mathcal{T} \to \RR$ by setting 
    \[
    \gamma^{z_0}_n(\sigma, \tau) \defas \frac{1}{n} \sum_{i=1}^{n} G_{z_i}. 
    \]

Note that, for a fixed initial state, since the directed graph $\Gamma$ is locally finite, the $n$-stage game is played on a finite state space with perfect information. 
For every $z\in Z$, the \emph{$n$-value} of the game with initial state $z_0 = z$ is defined as
    \[
    V_n(z) 
        \defas \max_{\sigma \in \Sigma} \min_{\tau \in \mathcal{T}} \gamma^{z}_n(\sigma, \tau)
        = \min_{\tau \in \mathcal{T}} \max_{\sigma \in \Sigma} \gamma^{z}_n(\sigma, \tau) \,,
    \]
where the classic minimax theorem~\cite{DP95, Neu28} applies to justify the above equality.
Moreover, we will say that a strategy $\sigma\in \Sigma$ (resp. $\tau\in \mathcal T$) is \emph{optimal} for the $n$-stage game (starting from $z$) if $\sigma$ maximizes $\min_{\tau\in \mathcal{T}} \gamma_n^z(\,\cdot\, , \tau)$ over $\Sigma$ (resp. if $\tau$ minimizes $\max_{\sigma\in \Sigma} \gamma_n^z(\sigma, \,\cdot\, )$ over $\mathcal{T}$).

A classic question in the game-theoretic literature is to ask for the convergence of the $n$-value as $n$ grows to infinity.
Since payoffs are random, $V_n$ is a random variable. Therefore, we are interested in whether the sequence $(V_n)$ converges a.s.\ to a constant.

In our model, if no further assumptions are imposed, it is possible that $(V_n)$ does not converge. 
For example, for all integers $m\ge 0$, set $n_m \defas 2^{2^{2m}}$ and $n'_m \defas 2^{2^{2m+1}}$ and consider the case where $\Gamma$ is a directed tree (all edges being directed away from the root) where each node with even height has only one child, 
while each node with odd height $k$ has two children if $k=1$ or $k\in [n_m,n_m')$ for some $m\ge 0$, and it has only one child if $k\in [n_m',n_{m+1})$.
Moreover, let the payoffs be i.i.d.\ Bernoulli random variables with parameter $1/2$.
In particular, for every $m\ge 1$, in the $n_m$-stage game, Player 2 has only one choice most of the time, while in the $n_m'$-stage game, she has two choices most of the time. 
Since Player 2 can not uniformly pick a vertex with payoff 0 (if it is present), and pick an available vertex otherwise, one can show that a.s.
\[
    \limsup_{m\to \infty} V_{n_m'}\le \frac{3}{8} < \frac{1}{2} = \lim_{m\to \infty} V_{n_m} \,.
\]
Indeed, while Player~1 never has a choice in the $n_m'$-game (implying that the mean payoff over the odd states visited by the token a.s.\ converges to $1/2$), Player~2 can ensure with the above strategy that the mean payoff over the even states visited by the token a.s.\ converges to $1/4$, which yields that a.s.\ $\limsup_{m\to \infty} V_{n_m'}\le 3/8$. 
At the same time, for every $\eps > 0$, Chernoff's bound for the Binomial distribution $\mathrm{Bin}(n_m, 1/2)$ and a union bound over the $O(2^{n_{m-1}'})$ vertices at level $n_m$ in $\Gamma$ shows that $V_{n_m}$ is in the interval $[1/2-\eps, 1/2+\eps]$ with probability very close to 1.
In particular, a.s.\ $(V_n)$ does not converge.
Therefore, to ensure convergence, we will need further structural assumptions on the graph.
\\
Before turning to our results, we provide some vocabulary. 
Given a vertex $z\in Z$, a \emph{descendant} of $z$ (in $\Gamma$) is a vertex that can be reached from $z$ by a directed path in $\Gamma$.
We say that $z$ and $z'$ are \textit{equivalent} 
if the two subgraphs of $\Gamma$ induced by the descendants of $z$ and by the descendants of $z'$, respectively, are isomorphic (as directed graphs).

\begin{definition}\label{def:wt}
    The graph $\Gamma$ is \textit{weakly transitive} if there is a state $z^*$ and an integer $M$ such that the following holds: for each state $z\in Z$, in the game with initial state $z_0 = z$, each player has a strategy that, independently of the moves of the opponent, ensures that the token is placed at a state equivalent to $z^*$ after an even number of $\ell\le M$ stages.
\end{definition}

Note that all vertex-transitive graphs are weakly transitive with $M = 0$.
In the remainder of the paper, we always assume that $\Gamma$ is weakly transitive. The next two subsections present two types of directed games used in our main results. 

\subsection{Weakly transitive games with sub-exponential expansion}

We continue with a few definitions. 
Given a state $z\in Z$, we consider a partition $\Pi_z \defas (Z_i(z))_{i\ge 0}$
of $Z$ such that: (i) $Z_0(z) = \{z\}$; and~(ii), for all strategies $(\sigma, \tau)\in \Sigma\times \mathcal{T}$ for the game starting at $z$ and for all $i \ge 1$, the token can visit the set $Z_i(z)$ at most once. 
Since $\Gamma$ has no directed cycles, such a partition exists. 
For example, the trivial one where every part contains a single state satisfies this property.
We call such partitions \textit{adapted}.  
For every integer $n\ge 1$, we also set $Z_{[n]}(z) \defas \bigcup_{j=0}^n Z_j(z)$ and $Z^{(n)}(z)$ for the set of \emph{reachable states} from $z$ after at most $n$ steps. 
Note that, when it is clear from the context, we omit $z$ from the notation and simply write $Z_n, Z^{(n)}$ and $Z_{[n]}$ for better readability.

Given a family of adapted partitions $\Pi \defas (\Pi_z)_{z\in Z}$ in a directed game, we define the \emph{transient speed function} $h$ of $\Pi$ as
\[
    h \colon n \in \NN \mapsto \max_{z\in Z} \min \left\{ k \in \NN : Z^{(n)}(z) \subseteq Z_{[k]}(z) \right\}.
\]
\noindent
Note that $h(n) \ge n$ for every integer $n\ge 1$ since, for every $z \in Z$, exactly $n$ of the sets $(Z_i(z))_{i\ge 1}$ are visited by the token after $n$ stages.
Our main goal is to analyze directed games where the size of the sets $Z^{(n)}(z)$ does not increase too fast as $n$ grows to infinity.

\begin{definition}[$\delta$-transient games]\label{def1}
    Given a family of adapted partitions $\Pi$ with transient speed $h$, we define the function $\psi \colon \NN \times (0, \infty) \to \RR$ by
    \[
        \psi(n, t) \defas \exp \left( -\frac{t^2 n^2}{2h(n)} \right) \max_{z\in Z} |Z^{(2n)}(z)| \,.
    \] 
    For a fixed $\delta > 0$, a directed game on a graph $\Gamma$ with vertex set $Z$ is called \emph{$\delta$-transient} if there exists a family of adapted partitions $\Pi$ of $Z$ and a sequence $(\eps_n)_{n\ge 1}$ such that $\eps_n + \psi(n, \eps_n) = O(n^{-\delta})$. Such a family $\Pi$ is called a \textit{$\delta$-adapted family}. 
\end{definition}

\begin{remark}
    The concept of $\delta$-transient games is only relevant for $\delta\in (0, 1/2)$.
    Indeed, Definition~\ref{def1} requires that $(\psi(n, \eps_n))_{n}$ converges to zero. 
    Therefore, since $h(n) \ge n$, this implies that $n = o(\eps_n^2 n^2)$, so that $\eps_n \in o(n^{1-1/2})$.
\end{remark}

\begin{remark}
    A sufficient condition under which a directed game is $\delta$-transient is the following: there exists an adapted partition $\Pi$ and real numbers $\alpha\in [0,2-2\delta)$ and $\beta\in [0,2-2\delta-\alpha)$ such that $h(n) = O(n^{\alpha})$ and $\max_{z\in Z} |Z^{(n)}(z)| = \exp(O(n^{\beta}))$.
\end{remark}

Note that the definition of a $\delta$-transient game is independent of the payoffs and only makes assumptions on the state space and the associated adapted partition. 
We now give a few examples of $\delta$-transient games.

\subsubsection{Oriented directed games}
Fix an integer $d\ge 1$, and denote by $e_i$ the $d$-dimensional vector with 1 in coordinate $i$ and 0 in all other $d-1$ coordinates.
Given positive integers $n_1,\ldots,n_d\ge 1$, a (directed) graph $\Gamma$ with vertex set $Z\subseteq \mathbb Z^d$ is called \emph{$(n_1,\ldots,n_d)$-invariant} (or simply \emph{invariant}) if, for every $i \in [1, d]$, the translation at vector $n_i e_i$ is a graph isomorphism for $\Gamma$.
A directed game is called \emph{oriented} if its underlying graph $\Gamma$ is invariant and there exists $u \in \RR^d\setminus \{0\}$ such that, for every directed edge $zw$ in $\Gamma$, we have $(w-z) \cdot u > 0$ (here, $\cdot$ denotes the usual scalar product of vectors in $\mathbb R^d$). We show the following proposition. 

\begin{proposition}\label{prop:oriented}
    Every oriented directed game is $\delta$-transient for all $\delta\in (0,1/2)$.
\end{proposition}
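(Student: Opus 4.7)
The plan is to exhibit an explicit adapted family $\Pi$ by slicing $Z$ along level sets of the orientation direction $u$, and then to verify the $\delta$-transience condition directly. The two key observations are that orientation forces $(w - z) \cdot u$ to increase linearly along directed paths (bounding $h(n)$ by $O(n)$), and that translation invariance confines reachable positions to a polynomial-size region (bounding $|Z^{(n)}(z)|$ by $O(n^d)$).

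First, I would use the $(n_1, \ldots, n_d)$-invariance of $\Gamma$ together with uniformly bounded degrees to reduce the edge set of $\Gamma$ to finitely many translation-orbits. Combined with the orientation assumption, this yields finite constants
\[
    c \defas \min_{zw \in E(\Gamma)} (w - z) \cdot u > 0, \qquad C \defas \max_{zw \in E(\Gamma)} \|w - z\|_\infty < \infty,
\]
where the extrema may be taken over a single representative per orbit.

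Second, for each $z \in Z$, I would define the partition $\Pi_z = (Z_i(z))_{i \geq 0}$ by setting $Z_0(z) \defas \{z\}$ and, for $i \geq 1$, grouping the remaining vertices of $Z$ according to the integer part of $((w - z) \cdot u)/c$. Since every edge increases $(w - z) \cdot u$ by at least $c$, the $\Pi_z$-index of the token strictly grows at every stage, so each $Z_i(z)$ with $i \geq 1$ is visited at most once, and $\Pi_z$ is adapted. The inequality $(w - z) \cdot u \leq n C \|u\|$ after $n$ moves then gives $h(n) = O(n)$; and the inclusion $Z^{(n)}(z) \subseteq z + \{v \in \mathbb{Z}^d : \|v\|_\infty \leq Cn\}$ gives $|Z^{(n)}(z)| = O(n^d)$ uniformly in $z$.

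Finally, I would verify the $\delta$-transience condition by taking $\eps_n \defas n^{-\delta}$: this yields $\psi(n, \eps_n) = O(n^d)\exp\bigl(-\Omega(n^{1 - 2\delta})\bigr) = o(n^{-\delta})$ since $\delta < 1/2$, so $\eps_n + \psi(n, \eps_n) = O(n^{-\delta})$. (Equivalently, one may invoke the remark after \Cref{def1} with $\alpha = 1$ and any $\beta \in (0, 1 - 2\delta)$.) The argument carries no real obstacle: the one point requiring care is checking that the level-set partition is genuinely adapted on the whole of $Z$ (including unreachable vertices), which is immediate from the strict positivity $c > 0$ guaranteed by orientation together with the finiteness of the set of edge types.
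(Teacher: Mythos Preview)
Your proposal is correct and follows essentially the same approach as the paper: both construct the adapted partition by slicing $Z$ along level sets of the linear functional $w \mapsto (w-z)\cdot u$, use the orientation to get $h(n)=O(n)$ and the invariance/bounded-degree structure to get $|Z^{(n)}(z)|=O(n^d)$, and then verify $\delta$-transience with $\eps_n=n^{-\delta}$. The only cosmetic difference is that the paper first replaces $u$ by a nearby integer vector (so that $(w-z)\cdot u\in\mathbb{Z}$ and each edge increments it by at least $1$), whereas you keep $u$ and bin by $\lfloor((w-z)\cdot u)/c\rfloor$; your explicit reduction to finitely many edge-orbits is exactly what justifies the paper's density/rescaling step as well.
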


The following two classes of games present particular examples of oriented directed games.

\begin{example}[Games on tilings]
    A \emph{tiling} is a periodic partition of the plane into translations of one or several polygonal shapes, called \emph{tiles}, with vertices in $\mathbb Z^2$. 
    Tilings naturally define planar graphs whose vertex set coincides with the corners of the tiles and two vertices are connected by an edge if these can be connected by following the boundary of a tile without meeting another vertex on the way.
    By equipping the edges of this graph with suitable orientations, one can generate many different oriented directed games, see e.g.\ Figure~\ref{fig:tiling}. 
    
    \begin{figure}[ht!]
        \centering 
        \includegraphics[scale=0.3]{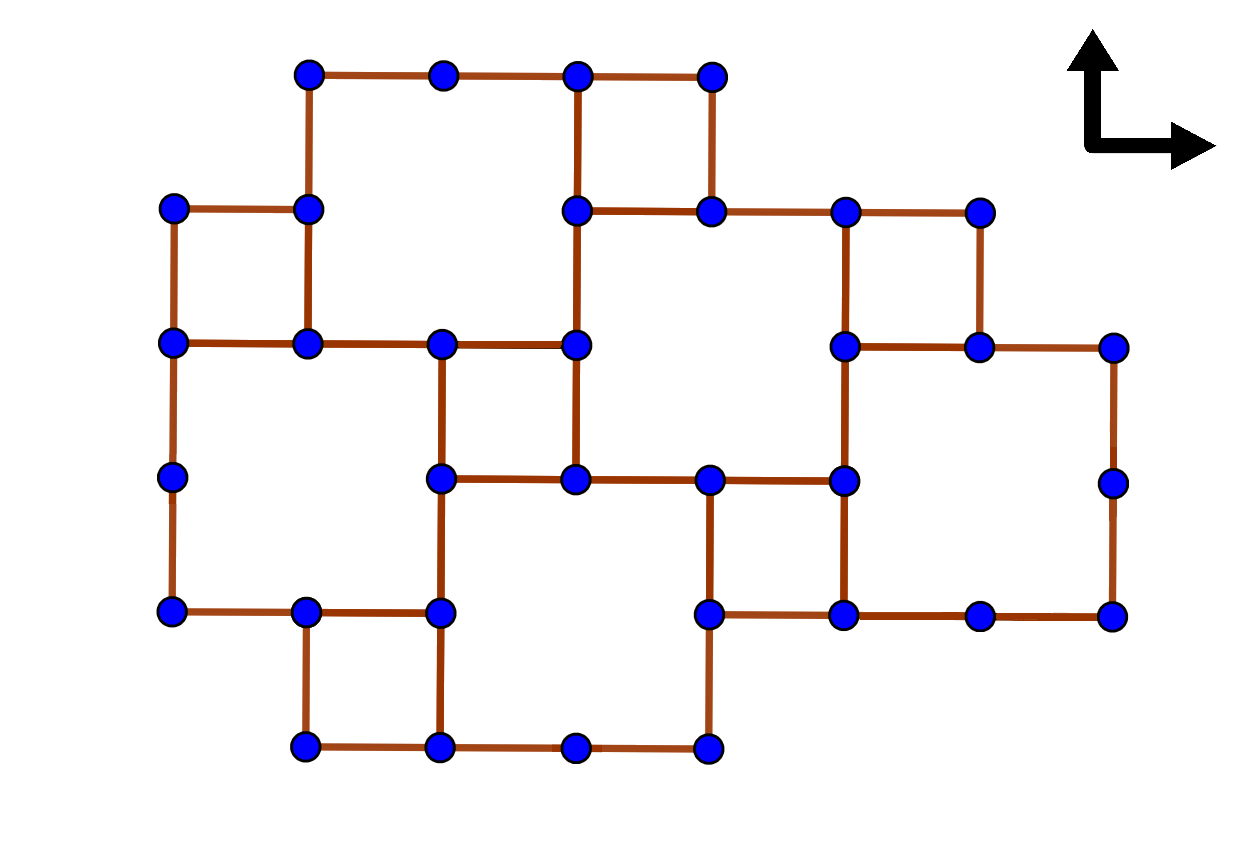}
        \caption{The figure depicts part of a tiling with two types of square tiles. 
        The vertices and the edges of the planar graph originating from the tiling are depicted in blue and red, respectively.
        Each horizontal edge is oriented from left to right and every vertical edge is oriented from bottom to top. 
        One may choose $z^*$ to be the bottom left vertex of a small square and $M=6$.}
        \label{fig:tiling}
    \end{figure}    
\end{example}

\begin{example}[Games on directed chains of graphs]
    Fix a finite vertex-transitive graph $H$ with vertex set $V(H)$ and edge set $E(H)$, and a bi-infinite sequence of copies $(H_i)_{i\in \mathbb Z}$ of $H$. 
    For every $i\in \mathbb Z$ and $u\in V(H)$, denote by $u_i$ the vertex in $H_i$ corresponding to $u$.
    We call an \emph{$H$-chain} the graph $\Gamma_H$ with vertices $\bigcup_{i\in \mathbb Z} V(H_i)$ and edges $\{u_iv_{i+1}: i\in \mathbb Z, uv\in E(H)\}$.
    
    Games on $H$-chains can be seen as instances of oriented directed games on $\mathbb Z$.
    Indeed, fixing $h = |V(H)|$, one may identify the vertices of $H_i$ with the integers in the interval $[ih+1, (i+1)h]$ for all $i\in \mathbb Z$ in a translation-invariant way.
\end{example}

\subsubsection{Weakly transitive games with controlled expansion}

Fix an arbitrary infinite rooted tree $T$ with root $r$ and a family of vertex-disjoint infinite paths $(P_v)_{v\in V(T)}$ where the path $P_v$ starts at vertex $v$ in $T$.
Define $\Gamma = T\cup (\bigcup_{v\in V(T)} P_v)$ as the tree rooted in $r$ and with all edges oriented away from $r$.
Let $(Z_n)_{n\ge 0}$ be a partition of the vertex set $Z$ of $\Gamma$ where $Z_n$ consists of all vertices at distance $n$ from $r$ for all $n\ge 0$.
Also, for every $z\in Z$ and $k\ge 2$, define $Z_k(z)$ to be the set of descendants of $z$ at distance $k$ from it while $Z_0(z) = \{z\}$ and $Z_1(z) = Z\setminus ((\bigcup_{k\ge 2} Z_k(z))\cup Z_0(z))$.
Note that, somewhat arbitrarily, we added all vertices not reachable from $z$ to $Z_1(z)$ to ensure that $(Z_i(z))_{i\ge 0}$ is a partition of $Z$.
Then, $\Pi_z = (Z_i(z))_{i\ge 0}$ is an adapted partition and $\Pi = (\Pi_z)_{z\in Z}$ is an adapted family of partitions.
Moreover, a single move of each player is sufficient to place the token at the second vertex of some infinite path among $(P_v)_{v\in V(T)}$.
This implies that the game is weakly transitive. 

Let us show that we can control the growth speed of $\max_{z\in Z} |Z^{(2n)}(z)|$.
Consider a set of non-negative integers $L = \{\ell_i: i\ge 1\}$ with $\ell_1 < \ell_2 < \ldots$ and let every vertex of $T$ in level $\ell$ have two children if $\ell\in L$ and one child otherwise. Moreover, suppose that $\ell_1 = 0$ and $(\ell_i - \ell_{i-1})_{i\ge 1}$ is a non-decreasing sequence.
Then, one can readily check that, for every $n\ge 1$, $\max_{z\in Z} |Z^{(n)}(z)| = |Z^{(n)}(r)|$. 
Indeed, for every $k, n\ge 1$ and a vertex $z\in Z$ on level $k$, using the assumptions that $\ell_1 = 0$ and $(\ell_i - \ell_{i-1})_{i\ge 1}$ is a non-decreasing sequence, we get
\[|Z^{(n)}(z)\setminus Z^{(n-1)}(z)| = 2^{|L\cap \{k,\ldots,k+n-1\}|}\le 2^{|L\cap \{0,\ldots,n-1\}|} = |Z^{(n)}(r)\setminus Z^{(n-1)}(r)|.\]
Thus, for every integer $n\ge 0$, $|Z^{(n)}(r)| = 1+\sum_{i=0}^{n-1} 2^{|L\cap \{0,\ldots,i\}|}$. Therefore, by a suitable choice of the set $L$, one can construct a tree $T$ with an arbitrary growth that is faster than linear but slower than exponential. 
In particular, for every $\delta\in (0, 1/2)$, this shows the existence of games that are $\delta$-transient but, for every $\delta' > \delta$, not $\delta'$-transient.

\subsection{\texorpdfstring{Directed games on $d$-ary trees}{}}

We turn our attention to a natural example of a directed game where the set of reachable states after $n$ steps grows exponentially with $n$.
Note that, for all $\delta > 0$, it is not a $\delta$-transient game. 
Fix an integer $d\ge 2$ and let $T$ be an \emph{infinite $d$-ary tree}, that is, a tree where every vertex has $d$ children, with vertex set $Z$ where every edge is oriented from the parent to the child.
We fix an arbitrary initial vertex $z_0$ and, for every integer $i\ge 0$, we define $Z_i$ to be the set of vertices in $Z$ that can be reached from $z_0$ by exactly $i$ steps and also denote $Z_{\rm{even}} \defas \bigcup_{i\ge 0} Z_{2i}$ and $Z_{\rm{odd}} \defas \bigcup_{i\ge 0} Z_{2i+1}$. 
Note that, for every $n \geq 1$, the random variables $(V_n(z))_{z \in Z}$ have the same distribution. 
Thus, we often omit the dependence of $V_n$ in $z$. 

\subsection{Main results}

Our first main result shows sharp concentration for the $n$-value of $\delta$-transient games around a deterministic constant.

\begin{theorem}\label{thm:1}
    Fix $\delta\in (0,1/2)$. Consider a $\delta$-transient directed game, a $\delta$-adapted family with transient speed $h$, and i.i.d.\ payoffs $(G_z)_{z\in Z}$ supported on the interval $[0,1]$. 
    Then, there exist constants $v_\infty \in [0, 1]$ and $K > 0$ such that, for all $n\ge 1$, $t\ge 0$, and $z \in Z$,
    \[
        \PP \left( |V_n(z) - v_\infty| \ge  t + K n^{-\delta} \right) \le 2 \exp \left(-\frac{t^2 n^2}{2h(n)} \right) \,.
    \]
    Consequently, $(V_n)$ converges almost surely to $v_\infty$.
\end{theorem}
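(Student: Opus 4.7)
The plan is to split the argument into three steps: (i) concentration of $V_n(z)$ around $\EE[V_n(z)]$ via McDiarmid's inequality; (ii) convergence of $v_n \defas \EE[V_n(z^*)]$ to a constant $v_\infty$ at rate $O(n^{-\delta})$ via a Cauchy argument based on weak transitivity; (iii) near-equality of $\EE[V_n(z)]$ and $v_\infty$ for every $z\in Z$. The theorem will follow by combining the three via the triangle inequality, and by Borel-Cantelli for the almost-sure statement.

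\emph{Step (i).} Fix $z$ and use the $\delta$-adapted partition $(Z_i(z))_{i \ge 0}$. Since $Z^{(n)}(z) \subseteq Z_{[h(n)]}(z)$, $V_n(z)$ is a measurable function of the independent blocks $Y_i \defas (G_w)_{w \in Z_i(z) \cap Z^{(n)}(z)}$ for $0 \le i \le h(n)$. The adapted property forces the token to visit each $Z_i(z)$ at most once along any trajectory, so replacing $Y_i$ by an arbitrary alternative in $[0,1]^{|Y_i|}$ changes $\gamma_n^z(\sigma, \tau)$ by at most $1/n$ for every fixed $(\sigma, \tau)$; the max-min operation preserves this bound. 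McDiarmid's bounded-differences inequality then yields
\[
\PP\!\left( |V_n(z) - \EE[V_n(z)]| \ge t \right) \le 2 \exp\!\left( - \frac{2 t^2 n^2}{h(n) + 1} \right),
\]
which is at least as strong as the bound claimed in the theorem for $h(n) \ge 1$.

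\emph{Step (ii).} To prove $(v_n)$ is Cauchy, I decompose the $n$-stage game from $z^*$ into two subgames glued by a weak-transitivity transition. Player~1 plays optimally in the first $n_1$ stages, then applies her weak-transitivity strategy $\sigma^+$ to reach some $z''\sim z^*$ in $\ell \le M$ even stages, and finally plays optimally in the remaining $n_2 = n - n_1 - \ell$ stages from $z''$. This yields the pointwise inequality
\[
n V_n(z^*) \ge n_1 V_{n_1}(z^*) + n_2 V_{n_2}(z'') - M,
\]
with a symmetric upper bound from Player~2's strategy $\tau^+$. By step~(i), each subvalue is within $O(n^{-\delta})$ of its mean with probability $1 - O(n^{-\delta})$; since $z''\sim z^*$ the random variable $V_{n_2}(z'')$ shares the distribution of $V_{n_2}(z^*)$, so taking expectations gives the near-additive relation $n v_n = n_1 v_{n_1} + n_2 v_{n_2} + O(M + n^{1-\delta})$. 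Choosing $n_1 = n_2 \approx n/2$ and iterating along $n = 2^k$ shows $(v_{2^k})$ is Cauchy; filling in intermediate scales by comparison delivers $v_n \to v_\infty$ with $|v_n - v_\infty| = O(n^{-\delta})$.

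\emph{Step (iii) and conclusion.} The same composite-strategy construction with $n_1 = 0$ gives $\EE[V_n(z)] \ge v_{n-\ell} - O(1/n)$, and together with step~(ii) this yields $|\EE[V_n(z)] - v_\infty| = O(n^{-\delta})$ uniformly in $z$. Combined with step~(i), the triangle inequality produces
\[
\PP\!\left( |V_n(z) - v_\infty| \ge t + K n^{-\delta} \right) \le 2 \exp\!\left( - \frac{t^2 n^2}{2 h(n)} \right)
\]
for an appropriate constant $K$, and almost sure convergence follows from Borel-Cantelli applied with any sequence $t_n$ that makes the tails summable (which is possible because the game is $\delta$-transient). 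The main obstacle is step~(ii): the reachable sets $Z^{(n_1)}(z^*)$ and $Z^{(n_2)}(z'')$ need not be disjoint, so the two subgame values are correlated. The trick is to bypass independence by using the concentration from step~(i) to replace each subvalue by its deterministic expectation, absorbing the correlation into the error term; carefully handling the dependence of $z''$ and $\ell$ on the opponent's strategy via a union bound over the (random but finite) set of possible $z''$ is the most technical ingredient.
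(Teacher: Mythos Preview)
Your proposal is correct and follows essentially the same route as the paper: McDiarmid concentration via the adapted partition (the paper's Lemma~\ref{lem:exp conc 1}), near-additivity of $n\EE[V_n]$ obtained by a composite strategy together with concentration and a union bound over the reachable states (Lemmas~\ref{lem:z_0 = z^*} and~\ref{lem:conv exp 1}), and then the triangle inequality for the final bound. The only organizational differences are that the paper separates the comparison $\EE[V_n(z)]\approx\EE[V_n(z^*)]$ from the subadditivity step (you fold the weak-transitivity transition directly into the composite strategy), and that the paper first invokes a de~Bruijn--Erd\H{o}s almost-subadditivity lemma to get existence of $v_\infty$ before doing the dyadic telescoping for the rate, whereas you run the dyadic Cauchy argument directly---your version is slightly more self-contained, but the substance is identical.
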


Our second main result shows that the $n$-value of the directed game on a $d$-ary tree is tightly concentrated around a constant.
\begin{theorem}
\label{thm:2}
    Fix an integer $d\ge 2$. Consider a directed game on the $d$-ary tree with i.i.d.\ payoffs supported on the interval $[0, 1]$. 
    Then, there exists a real number $v_\infty \in [0, 1]$ such that, for every $\delta \in (0,1/2)$, there exists $K > 0$ such that, for every $n\ge 1$ and $t \ge 0$,
    \[
        \PP(|V_n - v_\infty| \ge t + 2t^2 + Kn^{-\delta}) \le \exp\left(-\frac{1}{6}\exp\left(\frac{t^2n}{4}\right)\right) \,.
    \]
    Consequently, $(V_n)$ converges almost surely to $v_\infty$.
\end{theorem}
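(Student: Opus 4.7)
The random variable $V_n$ has the same distribution for every starting vertex by the vertex-transitivity of the $d$-ary tree; set $\mu_n \defas \EE[V_n]$. The proof splits into two components: showing that $(\mu_n)$ converges to some $v_\infty$ at polynomial rate, and showing that $V_n$ concentrates around $\mu_n$ at the claimed double-exponential rate.

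The starting point for both parts is a decomposition obtained by revealing the game up to some intermediate level $L$. Conditioning on the values $W_z \defas V_{n-L}(z)$ indexed by level-$L$ vertices (which form an i.i.d.\ family, independent of the payoffs at levels $1,\ldots,L$), the minimax structure of the $n$-stage game gives
\[
n V_n = \max_{\sigma\in\Sigma}\min_{\tau\in\mathcal T}\left[\sum_{i=1}^L G_{z_i} + (n-L) W_{z_L}\right],
\]
the minimax being over length-$L$ trajectories $z_0, z_1, \ldots, z_L$ from the root. On the event $\mathcal{E}_\eta$ that $|W_z - \mu_{n-L}| \le \eta$ for every level-$L$ vertex $z$, the $1$-Lipschitzness of the minimax produces the deterministic sandwich
\[
L V_L + (n-L)\mu_{n-L} - (n-L)\eta \;\le\; n V_n \;\le\; L V_L + (n-L)\mu_{n-L} + (n-L)\eta,
\]
where $V_L$ is the $L$-stage value computed from the same top-level payoffs. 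The failure event is controlled by the union bound $\PP(\mathcal{E}_\eta^c) \le d^L \PP(|V_{n-L} - \mu_{n-L}| \ge \eta)$.

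Taking expectations in this sandwich (and controlling the complement) yields the approximate recursion $n\mu_n \approx L\mu_L + (n-L)\mu_{n-L}$, from which a quantitative Fekete-type argument produces the limit $v_\infty$ and the polynomial rate $|\mu_n - v_\infty| = O(n^{-\delta})$. For concentration, the sandwich yields $V_n - \mu_n \approx \tfrac{L}{n}(V_L - \mu_L) + O(\eta + n^{-\delta})$ on $\mathcal{E}_\eta$; hence the tail of $V_n$ at scale $t$ is controlled by the tail of $V_L$ at scale $tn/L$, on the shallower tree, plus the cost $d^L$ times the tail of $V_{n-L}$ at scale $\eta$. A base estimate of single-exponential (Gaussian) type for $V_L$ can be obtained directly via Hoeffding's inequality applied to the $O(d^L)$ payoffs at the top levels, each of which alters $nV_n$ by at most $1$.

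Iterating this reduction, with $L$ and $\eta$ chosen at each step as functions of the current target scale, bootstraps the base Gaussian tail into the claimed double-exponential form $\exp(-\tfrac{1}{6}\exp(t^2n/4))$. The hard part of the argument is calibrating the parameters so that the union-bound cost $d^L$ is more than compensated by the improved concentration of $V_L$ at each iteration, and keeping track of the quadratic error that produces the additive $2t^2$ correction in the stated deviation.
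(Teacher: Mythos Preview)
Your decomposition at an intermediate level $L$ and the resulting sandwich are correct and match the paper's setup. The gap is in how you control the bad event. You take $\mathcal{E}_\eta=\{|W_z-\mu_{n-L}|\le\eta\text{ for all }z\in Z_L\}$ and bound $\PP(\mathcal{E}_\eta^c)\le d^L\,\PP(|V_{n-L}-\mu_{n-L}|\ge\eta)$. With the best available single-exponential input $\PP(|V_{n-L}-\mu_{n-L}|\ge\eta)\le\exp(-\eta^2(n-L)/2)$, this yields at most $\exp(L\log d-\eta^2(n-L)/2)$, which is only single-exponential in $n$ no matter how you tune $L$ and $\eta$ under the constraint $\eta<t$. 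Feeding such a bound back into your recursion does not change its functional form: one step of your reduction turns a Gaussian tail $\exp(-cs^2m)$ into another bound of the same type, so iteration cannot bootstrap to $\exp(-c'\exp(t^2n/4))$. The double-exponential rate has to enter through a genuinely different mechanism.

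The paper's mechanism is the one you are missing: it does \emph{not} require all level-$k$ vertices to be good. It observes that if the set $S_k$ of bad vertices has size strictly less than $d^{k/2}$, then Player~2 (who controls $k/2$ of the first $k$ moves) can steer the token away from $S_k$ by a pigeonhole argument, always moving to the child with the smaller fraction of $S_k$ below it. Thus the relevant event is $\{|S_k|\ge d^{k/2}\}$, and since $|S_k|\sim\mathrm{Bin}(d^k,q)$ with $q\le\exp(-t^2(n-k)/2)$, a Chernoff bound gives $\PP(|S_k|\ge d^{k/2})\le\exp(-d^{k/2}/6)$ once $k\log d\lesssim t^2(n-k)$. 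Choosing $k\approx t^2n/(2\log d)$ produces the stated double-exponential bound in a single step. A secondary issue: your base estimate is misstated. Treating the $O(d^L)$ top-level payoffs as separate coordinates in McDiarmid gives the useless bound $\exp(-s^2L^2/(2d^L))$; the correct Gaussian input $\exp(-s^2L/2)$ comes from grouping payoffs by level (each level is visited once), which is Lemma~\ref{lem:exp conc 1}.
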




\paragraph{Outline of the proofs.} The proofs of both theorems contain two main steps. 
The first step involves standard concentration arguments showing that $V_n$ is close to $\mathbb E[V_n]$ with high probability.
While these are sufficient for Theorem~\ref{thm:1}, the stronger probabilistic bound in Theorem~\ref{thm:2} requires an additional boosting obtained by dividing the first $n$ levels of the $d$-ary tree into two groups of consecutive levels and treating the $n$-stage game as two consecutive games on $k$ and $n-k$ stages respectively.
The second step uses the structure of the underlying graph to show that $\mathbb E[V_n]$ satisfies a certain subadditivity assumption, which allows us to conclude that $(\mathbb E[V_n])_{n\ge 1}$ converges to a constant $v_{\infty}$, and moreover, $|\mathbb E[V_n]-v_{\infty}|$ is polynomially small. The proof of Proposition~\ref{prop:oriented} relies on a simple explicit construction.

\paragraph{Perspectives}
The proofs of Theorems \ref{thm:1} and \ref{thm:2} have a similar structure but use different arguments. 
A challenging research question would be to prove convergence of $(V_n)$ and concentration bounds in any weakly transitive directed game, irrespective of the expansion speed of the underlying graph, thus unifying Theorems \ref{thm:1} and \ref{thm:2}. 

\paragraph{Plan of the paper.} This paper is organized as follows. 
In Section~\ref{sec:prelims}, we state some classical results we will use later.
Then, in Section~\ref{sec:thm1}, we prove Theorem~\ref{thm:1}, and in Section~\ref{sec:thm2}, we prove Theorem~\ref{thm:2}.
In Section~\ref{sec:prop}, we prove Proposition~\ref{prop:oriented}.


\section{Classical results}\label{sec:prelims}
In our proofs, we make use of the well-known \emph{bounded difference inequality}, also known as \emph{McDiarmid's inequality}, tightly related to \emph{Azuma's inequality}.



\begin{lemma}[Corollary~2.27 in~\cite{JLR00}]\label{lem:McDiar}
    Fix a function $f \colon \Lambda_1 \times \dots \times \Lambda_N \rightarrow \mathbb{R}$ and let 
    $Y_1, \dots, Y_N$ be independent random variables taking values in $\Lambda_1, \dots, \Lambda_N$, respectively. 
    Suppose that there are positive constants $c_1, \dots, c_N$ such that, for every two vectors $z, w \in \Lambda_1\times \dots\times \Lambda_N$ that differ only in the $k$-th coordinate, we have $|f(z) -f(w)| \leq c_k$. 
    Then, for every $t \ge 0$, the random variable $X = f(Y_1,\dots,Y_N)$ satisfies
    \[
        \mathbb{P}(X - \mathbb{E}[X] \ge t) \leq \exp\left( -\frac{t^2}{2\sum_{i=1}^N c_i^2}\right) \,.
    \]
    \[
        \mathbb{P}(X - \mathbb{E}[X] \leq -t) \leq \exp\left( -\frac{t^2}{2\sum_{i=1}^N c_i^2}\right) \,.
    \]
\end{lemma}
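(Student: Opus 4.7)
The plan is to prove Lemma~\ref{lem:McDiar} via the classical Doob martingale construction combined with an Azuma-Hoeffding-type concentration inequality. To this end, let $\mathcal{F}_i \defas \sigma(Y_1, \ldots, Y_i)$ for $0 \leq i \leq N$ (with $\mathcal{F}_0$ the trivial $\sigma$-algebra), and set $X_i \defas \mathbb{E}[X \mid \mathcal{F}_i]$. By construction $X_0 = \mathbb{E}[X]$, $X_N = X$, and $(X_i)_{0 \leq i \leq N}$ is a martingale with respect to $(\mathcal{F}_i)$, so that $X - \mathbb{E}[X] = \sum_{i=1}^N D_i$ for the martingale differences $D_i \defas X_i - X_{i-1}$.

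The key step is to show that $|D_i| \leq c_i$ almost surely. Using the independence of the $Y_j$'s and the tower property, one may write $X_i = g_i(Y_1, \ldots, Y_i)$ and $X_{i-1} = \mathbb{E}_{Y_i}\bigl[g_i(Y_1, \ldots, Y_{i-1}, Y_i)\bigr]$ where
\[
    g_i(y_1, \ldots, y_i) \defas \mathbb{E}\bigl[f(y_1, \ldots, y_i, Y_{i+1}, \ldots, Y_N)\bigr].
\]
The bounded differences assumption on $f$ yields, via linearity of expectation, that $\bigl| g_i(y_1, \ldots, y_{i-1}, y_i) - g_i(y_1, \ldots, y_{i-1}, y_i') \bigr| \leq c_i$ for any $y_i, y_i' \in \Lambda_i$. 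Hence, conditionally on $\mathcal{F}_{i-1}$, the random variable $D_i$ lies in an interval of length at most $c_i$; combined with the martingale-centering identity $\mathbb{E}[D_i \mid \mathcal{F}_{i-1}] = 0$, this yields the almost sure bound $|D_i| \leq c_i$.

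To conclude, I would apply an Azuma-Hoeffding argument to the martingale $(X_i)$. From $|D_i| \leq c_i$ and $\mathbb{E}[D_i \mid \mathcal{F}_{i-1}] = 0$, Hoeffding's lemma for bounded mean-zero random variables gives $\mathbb{E}[e^{\lambda D_i} \mid \mathcal{F}_{i-1}] \leq \exp(\lambda^2 c_i^2 / 2)$; iterating through the tower property and then optimizing in $\lambda > 0$ via a Chernoff-style bound yields
\[
    \PP(X - \mathbb{E}[X] \geq t) \leq \exp\left( -\frac{t^2}{2 \sum_{i=1}^N c_i^2} \right).
\]
The lower-tail inequality follows by applying the same argument to $-f$ in place of $f$.

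The main subtlety is the martingale-difference bound $|D_i| \leq c_i$: one must carefully disentangle the two layers of conditional expectation, since the bounded differences hypothesis on $f$ only directly controls the range of $g_i$ in its $i$-th coordinate, and one then has to invoke the centering identity to upgrade this conditional range bound into a pointwise absolute bound. Once this is in place, the remaining ingredients, namely Hoeffding's lemma and the standard Chernoff-type tail-bound optimization, are routine and yield both inequalities of the statement simultaneously.
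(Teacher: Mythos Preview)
Your proof is correct and follows the standard Doob-martingale/Azuma--Hoeffding route that underlies this classical result. Note that the paper does not give its own proof of Lemma~\ref{lem:McDiar}: it is simply quoted as Corollary~2.27 of~\cite{JLR00}, so there is no in-paper argument to compare against.

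One minor remark on exposition: the detour through the pointwise bound $|D_i|\le c_i$ is slightly wasteful. You already establish that, conditionally on $\mathcal{F}_{i-1}$, the difference $D_i$ lies in an interval of length at most $c_i$. Applying Hoeffding's lemma directly with that range (rather than the cruder $[-c_i,c_i]$) would yield $\mathbb{E}[e^{\lambda D_i}\mid\mathcal{F}_{i-1}]\le\exp(\lambda^2 c_i^2/8)$ and hence the sharper constant $\exp(-2t^2/\sum_i c_i^2)$. Your version, using range $2c_i$, gives exactly the constant stated in the lemma, so nothing is wrong; it is just worth noting that the intermediate step ``centering plus range $c_i$ implies $|D_i|\le c_i$'' loses a factor of~$4$ in the exponent relative to what the same ingredients actually prove.
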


We also use the following result that states convergence of almost subadditive sequences.

\begin{lemma}[Theorem 23 in~\cite{BE52}]\label{lem:subadd}
Fix an increasing function $\phi \colon \NN \to (0, \infty)$ such that the sum of $(\phi(n)/n^2)_{n\ge 1}$ is finite, and a function $f \colon \NN \to \RR$ such that, for all $n \in \NN$ and all integers $m \in [n/2, 2n]$, $f(n + m) \le f(n) + f(m) + \phi(n + m)$. Then, there exists $\ell \in \RR\cup \{-\infty\}$ such that
\[\frac{f(n)}{n} \xrightarrow[n \to \infty]{} \ell.\]
\end{lemma}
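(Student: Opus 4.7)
The plan is to show $\limsup_n f(n)/n \le \ell \defas \liminf_n f(n)/n$, from which convergence to $\ell\in\mathbb{R}\cup\{-\infty\}$ follows (with the convention that $f(n)/n\to-\infty$ when $\ell=-\infty$).

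First, specializing the hypothesis to $m=n$ (which is admissible since $n\in[n/2,2n]$) gives $f(2n)\le 2f(n)+\phi(2n)$; iterating yields
\[
    \frac{f(2^kn)}{2^kn} \le \frac{f(n)}{n} + \frac{1}{n}\sum_{j=1}^k \frac{\phi(2^jn)}{2^j}\,.
\]
Since $\phi$ is nondecreasing, a dyadic comparison shows the quantity $E(n)\defas \frac{1}{n}\sum_{j\ge 1}\phi(2^jn)/2^j$ is at most a constant multiple of $\sum_{m\ge n}\phi(m)/m^2$, hence vanishes as $n\to\infty$. Second, for any reference scale $n_0$ and integer $q\ge 1$, a strong induction on $q$ based on the balanced split $q=\lfloor q/2\rfloor+\lceil q/2\rceil$ (both halves lie in $[q/3,2q/3]$, so the hypothesis applies to $f(qn_0)\le f(\lfloor q/2\rfloor n_0)+f(\lceil q/2\rceil n_0)+\phi(qn_0)$) yields
\[
    f(qn_0) \le qf(n_0) + qn_0\cdot E'(n_0)\,,
\]
where the $\phi$-errors accumulated along the resulting balanced binary tree of depth $\lceil\log_2 q\rceil$ sum to at most $qn_0\cdot O(\sum_{m\ge n_0}\phi(m)/m^2)$ by the same dyadic comparison.

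Third, and most delicate, for any $\varepsilon>0$ I would extract two coprime good scales $n_1,n_2$, namely with $f(n_i)/n_i<\ell+\varepsilon$ and $\gcd(n_1,n_2)=1$. The existence of such a pair uses both the doubling step (which propagates goodness along $2^kn_i$) and the summability of $\phi(m)/m^2$ (which prevents all good scales from being confined to a nontrivial residue class, since this would create two subsequences of $f(n)/n$ tending to distinct limits and contradict the existence of a limit). For every $N\ge 3n_1n_2$, the Sylvester--Frobenius theorem together with the shift $(a,b)\mapsto (a+tn_2,b-tn_1)$ produces a balanced representation $N=an_1+bn_2$ with $an_1,bn_2\in[N/3,2N/3]$; applying the hypothesis to this split and the second step to each summand,
\[
    \frac{f(N)}{N} \le \frac{an_1}{N}\Bigl(\frac{f(n_1)}{n_1}+E'(n_1)\Bigr) + \frac{bn_2}{N}\Bigl(\frac{f(n_2)}{n_2}+E'(n_2)\Bigr) + \frac{\phi(N)}{N} \le \ell+\varepsilon+o_N(1)\,.
\]
Sending $N\to\infty$ and then $\varepsilon\to 0$ concludes.

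The main obstacle is this third step: the restricted range $m\in[n/2,2n]$ breaks Fekete's classical division-with-remainder argument, because a small remainder cannot be paired with its much larger quotient. The workaround via Sylvester--Frobenius on two coprime good scales is the crux; establishing the existence of coprime good scales (rather than merely one good scale) is the subtlest ingredient and the point where the summability of $\phi(m)/m^2$ plays its decisive role.
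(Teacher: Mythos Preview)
The paper does not prove this lemma; it is simply quoted as Theorem~23 of de~Bruijn--Erd\H{o}s~\cite{BE52}. So there is no ``paper's proof'' to compare against, and I will just assess your argument on its own merits.

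Your steps~1 and~2 are fine. The doubling bound and the balanced-binary-tree estimate $f(qn_0)\le qf(n_0)+qn_0\,E'(n_0)$ with $E'(n_0)=O\bigl(\sum_{m\ge n_0}\phi(m)/m^2\bigr)$ are correct (the error analysis over the tree needs a little care, but grouping nodes by dyadic size of $q'$ and comparing to $\sum\phi(m)/m^2$ works exactly as you indicate).

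The gap is in step~3. Your justification for the existence of two \emph{coprime} good scales is circular: you write that otherwise ``this would create two subsequences of $f(n)/n$ tending to distinct limits and contradict the existence of a limit'', but the existence of the limit is precisely what you are trying to prove. Nothing established so far rules out that every $n$ with $f(n)/n<\ell+\varepsilon$ is a multiple of some fixed $d>1$; step~2 only propagates goodness to \emph{multiples} of a good scale, which does not help with coprimality.

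The fix is to drop the Sylvester--Frobenius detour entirely: a single good scale $n_0$ suffices. For any $N\ge 3n_0$ the interval $[N/3,2N/3]$ has length $\ge n_0$, so it contains some multiple $qn_0$; the hypothesis then gives
\[
f(N)\le f(qn_0)+f(N_1)+\phi(N),\qquad N_1\defas N-qn_0\in[N/3,2N/3].
\]
Iterate on $N_1$: after $K=O(\log(N/n_0))$ steps the remainder $N_K$ drops below $3n_0$. Since $\sum_k q_k n_0=N-N_K$ and, by step~2, each $f(q_kn_0)\le q_kn_0\bigl(f(n_0)/n_0+E'(n_0)\bigr)$, while $\sum_k\phi(N_k)\le C\,N\sum_{m\ge n_0}\phi(m)/m^2$ (using $\phi(m)\le 4m\sum_{j\ge m}\phi(j)/j^2$ and $\sum_k N_k\le 3N$), you get
\[
\frac{f(N)}{N}\le \frac{f(n_0)}{n_0}+E'(n_0)+C\sum_{m\ge n_0}\frac{\phi(m)}{m^2}+\frac{C_{n_0}}{N}.
\]
Letting $N\to\infty$, then $n_0\to\infty$ along good scales, then $\varepsilon\to0$, yields $\limsup f(N)/N\le\ell$ without any coprimality argument.
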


\section{\texorpdfstring{$\delta$-transient games: proof of Theorem~\ref{thm:1}}{}}\label{sec:thm1}

Fix an initial state $z_0$ and write $V_n = V_n(z_0), Z_n = Z_n(z_0)$ for short. 
To begin with, we show that $V_n$ is well concentrated around its expected value. 
Note that the next lemma holds for weakly transitive games in general and will be reused in the next section.

\begin{lemma}\label{lem:exp conc 1}
    For every $t \ge 0$, 
    \[
        \PP(V_n - \EE[V_n]\ge t)\le \exp\left(-\frac{t^2n^2}{2 h(n)}\right) \,,
    \]
    \[
        \PP(V_n - \EE[V_n]\le -t)\le \exp\left(-\frac{t^2n^2}{2 h(n)}\right) \,.
    \]
\end{lemma}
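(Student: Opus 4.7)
The plan is to apply McDiarmid's inequality (Lemma~\ref{lem:McDiar}) to $V_n$ viewed as a function of independent ``batches'' of payoffs, where the batches are indexed by the parts of the adapted partition $\Pi_{z_0} = (Z_i(z_0))_{i \ge 0}$ associated to the initial state $z_0$.

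First, I would observe that $V_n$ depends only on the payoffs inside $Z^{(n)}(z_0)$, since the token cannot leave this set in $n$ stages. I would then group the variables $(G_z)_{z \in Z^{(n)}(z_0)}$ according to the partition, defining for each $i$ the batch
\[
    B_i \defas (G_z)_{z \in Z_i(z_0) \cap Z^{(n)}(z_0)}.
\]
Since the payoffs are i.i.d., the batches $B_i$ are mutually independent, and by the very definition of the transient speed function, the number of non-empty batches is at most $h(n)$. Thus $V_n$ can be expressed as a measurable function $f(B_{i_1}, \ldots, B_{i_N})$ of at most $N \le h(n)$ independent inputs.

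The key step is to prove the bounded-difference property with constants $c_k = 1/n$ for every $k$. Here I would use that strategies $\sigma \in \Sigma$ and $\tau \in \mathcal{T}$ are, by definition, functions of the history of visited states and not of the payoffs. Consequently, once $\sigma, \tau$ and $z_0$ are fixed, the trajectory $(z_1,\ldots,z_n)$ is determined and does not depend on the payoff realization. Now fix two realizations of payoffs $\omega, \omega'$ agreeing everywhere except possibly on $Z_i(z_0)$. Because $\Pi_{z_0}$ is adapted, the trajectory visits $Z_i(z_0)$ at most once, so at most one of the summands in $n\,\gamma^{z_0}_n(\sigma,\tau)$ changes between $\omega$ and $\omega'$, and by at most $1$ since payoffs lie in $[0,1]$. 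Therefore
\[
    |\gamma^{z_0}_n(\sigma,\tau)(\omega) - \gamma^{z_0}_n(\sigma,\tau)(\omega')| \le \frac{1}{n}
\]
for every $(\sigma,\tau)$. Since $V_n$ is a max-min of $\gamma^{z_0}_n$ over strategies, the elementary inequality $|\max_x \min_y f(x,y) - \max_x \min_y g(x,y)| \le \sup_{x,y} |f(x,y) - g(x,y)|$ yields $|V_n(\omega) - V_n(\omega')| \le 1/n$, as desired.

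Finally, I would invoke Lemma~\ref{lem:McDiar} with $c_1 = \cdots = c_N = 1/n$ and $N \le h(n)$, so that $\sum_k c_k^2 \le h(n)/n^2$, directly yielding both tail bounds claimed in the lemma. There is no real obstacle here: the only delicate point is the separation between strategies (defined purely on histories) and payoff realizations, which guarantees that a strategy pair produces the same trajectory for any $\omega$ and so reduces the bounded-difference check to the adaptedness of the partition.
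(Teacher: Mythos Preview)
Your proposal is correct and follows essentially the same approach as the paper: group the payoffs according to the adapted partition $(Z_i(z_0))_{i\ge 0}$, observe that $V_n$ depends on at most $h(n)$ such independent blocks, verify the $1/n$ bounded-difference property via the ``visit at most once'' condition, and apply McDiarmid's inequality. The paper's proof is slightly terser (it does not spell out the $\max$--$\min$ Lipschitz step or the fact that the trajectory depends only on the strategy pair and not on the payoffs), but the argument is identical in substance.
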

\begin{proof}
    Define the (random) vectors $X_k = (G_z)_{z\in Z_k}\in [0,1]^{|Z_k|}$. Then, since $Z^{(n)}\subseteq Z_{[h(n)]}$, $V_n$ can be written as $f(X_1, \dots, X_{h(n)})$ for some function $f \colon [0,1]^{|Z_1|}\times \dots\times [0,1]^{|Z_{h(n)}|}\to \RR$. Moreover, for every integer $k\in [1, h(n)]$, the token visits the set $Z_k$ at most once and therefore, for every pair of strategies $(\sigma, \tau)\in \Sigma\times \mathcal{T}$, $\gamma_n^{z_0}(\sigma, \tau)$ varies by at most $1/n$ as a function of $X_k$. Hence, for every choice of vectors $(x_i)_{i=1}^{h(n)}\in [0,1]^{|Z_1|}\times \dots\times [0,1]^{|Z_{h(n)}|}$ and $x_k'\in [0,1]^{|Z_k|}$,
    \[
        |f(x_1, \dots, x_k, \dots, x_{h(n)}) - f(x_1, \dots, x_k', \dots, x_{h(n)})|
            \le \frac{1}{n} ',.
    \]
    Lemma~\ref{lem:McDiar} applied to $V_n$ finishes the proof. 
\end{proof}

In the remainder of the proof, we show that $\mathbb E[V_n]$ converges to a constant polynomially fast. Next, we state and prove an auxiliary lemma relating the values of games of different lengths.

\begin{lemma}\label{lem:differentstages}
    Fix integers $n\ge 1$ and $k\in [1,n]$. Then, $|V_n-V_{n-k}| \le k/n$.
\end{lemma}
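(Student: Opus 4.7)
The plan is to compare the payoff functions $\gamma_n^{z_0}$ and $\gamma_{n-k}^{z_0}$ pointwise on $\Sigma\times\mathcal T$, and then pass to the minimax. Since a strategy is a function on arbitrary finite histories, the same pair $(\sigma,\tau)\in\Sigma\times\mathcal T$ can be used to play either the $n$-stage or the $(n-k)$-stage game, generating the same trajectory $(z_i)_{i\ge 0}$ from the initial state $z_0$; this is what makes the pointwise comparison meaningful.

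First, I would write the elementary identity
\[
\gamma_n^{z_0}(\sigma,\tau) - \gamma_{n-k}^{z_0}(\sigma,\tau) = \frac{1}{n}\sum_{i=n-k+1}^{n} G_{z_i} - \frac{k}{n}\,\gamma_{n-k}^{z_0}(\sigma,\tau),
\]
obtained by decomposing $\gamma_n^{z_0}(\sigma,\tau) = \frac{n-k}{n}\gamma_{n-k}^{z_0}(\sigma,\tau) + \frac{1}{n}\sum_{i=n-k+1}^n G_{z_i}$ and rearranging. Since payoffs lie in $[0,1]$, each of the two terms on the right-hand side belongs to $[0,k/n]$, so their difference lies in $[-k/n,k/n]$. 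This yields the uniform pointwise bound $|\gamma_n^{z_0}(\sigma,\tau) - \gamma_{n-k}^{z_0}(\sigma,\tau)|\le k/n$ over all $(\sigma,\tau)\in\Sigma\times\mathcal T$.

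Second, I would invoke the standard fact that a uniform pointwise bound is preserved by $\max_\sigma\min_\tau$: if $|f(\sigma,\tau)-g(\sigma,\tau)|\le c$ for every $(\sigma,\tau)$, then $|\max_\sigma\min_\tau f - \max_\sigma\min_\tau g|\le c$. Applied with $f=\gamma_n^{z_0}$ and $g=\gamma_{n-k}^{z_0}$, this gives $|V_n - V_{n-k}|\le k/n$, as desired.

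The argument is a short direct computation and I do not anticipate any genuine obstacle. The only conceptual point worth highlighting is that strategies in this model are defined independently of the horizon, so a single pair $(\sigma,\tau)$ does produce a well-defined trajectory that can be evaluated under either $\gamma_n^{z_0}$ or $\gamma_{n-k}^{z_0}$; once this is noted, the inequality follows immediately from $G_z\in[0,1]$.
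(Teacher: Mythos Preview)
Your proof is correct. Both arguments hinge on the same decomposition $n\gamma_n = (n-k)\gamma_{n-k} + \sum_{i=n-k+1}^n G_{z_i}$ together with $G_z\in[0,1]$, but they pass to the value differently. The paper argues strategically: it fixes an optimal strategy for the $(n-k)$-stage game for each player in turn, extends it arbitrarily to $n$ stages, and reads off $nV_n\ge (n-k)V_{n-k}$ and $nV_n\le (n-k)V_{n-k}+k$ directly. You instead bound $|\gamma_n^{z_0}-\gamma_{n-k}^{z_0}|\le k/n$ uniformly over $\Sigma\times\mathcal T$ and invoke the stability of $\max\min$ under uniform perturbation. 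Your route is slightly more elementary in that it never needs to single out optimal strategies or attach the word ``arbitrary'' to the last $k$ moves; it also makes explicit the key structural fact (strategies are horizon-free, so the same $(\sigma,\tau)$ generates the same trajectory for both games), which the paper leaves implicit. The paper's route, on the other hand, is a line shorter and is phrased in the same strategic language used in the subsequent lemmas (e.g.\ Lemma~\ref{lem:z_0 = z^*}), where one genuinely needs to build composite strategies rather than compare payoff functions pointwise.
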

\begin{proof}
    Suppose that Player $1$ (resp. Player $2$) plays the first $n-k$ stages according to an optimal strategy for the $(n-k)$-stage game, and plays arbitrarily during the remaining $k$ stages of the $n$-stage game.
    Then, $nV_{n} \ge (n-k)V_{n-k}$ and $nV_{n} \le (n-k)V_{n-k} + k$. Hence, $|n(V_n - V_{n-k})|\le \max(kV_{n-k}, k-kV_{n-k})\le k$, which implies the statement of the lemma.
\end{proof}

The next lemma shows that starting from different initial states 
changes the $n$-value only slightly when $n$ is large.

\begin{lemma}\label{lem:z_0 = z^*}
    For every $z\in Z$, $|\mathbb E[V_n(z)] - \mathbb E[V_n(z^*)]| = O(n^{-\delta})$.
\end{lemma}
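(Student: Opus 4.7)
The plan is to sandwich $\mathbb{E}[V_n(z)]$ between $\mathbb{E}[V_n(z^*)] \pm O(n^{-\delta})$, using the weak transitivity of $\Gamma$, the concentration bound from Lemma~\ref{lem:exp conc 1}, the short-range comparison from Lemma~\ref{lem:differentstages}, and the distributional identity for equivalent subgames. I start with the lower bound. Consider the strategy $\sigma$ for Player~1 that plays the weak transitivity strategy until the token first lands at a $z^*$-equivalent vertex $z'$ after an even number of stages $\ell \le M$, and then plays optimally in the $(n-\ell)$-stage subgame starting at $z'$. Against any $\tau \in \mathcal{T}$, since the first $\ell$ payoffs are nonnegative and Player~1 plays optimally thereafter, this secures $\gamma_n^z(\sigma, \tau) \ge \frac{n-\ell}{n} V_{n-\ell}(z')$. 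Although $(\ell, z')$ depends on $\tau$ (and potentially on the payoffs through $\tau$), it always belongs to a finite set $\mathcal{S}$ whose cardinality is controlled by $M$ and the bounded out-degrees of $\Gamma$. Minimizing over $\tau$ yields $V_n(z) \ge \min_{(k, w) \in \mathcal{S}} \frac{n-k}{n} V_{n-k}(w)$. A symmetric argument based on Player~2's weak transitivity strategy delivers the matching upper bound $V_n(z) \le \max_{(k, w) \in \mathcal{S}'} \bigl( \frac{k}{n} + \frac{n-k}{n} V_{n-k}(w) \bigr)$ for an analogous finite set $\mathcal{S}'$.

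Next, I would control each $V_{n-k}(w)$ for $(k, w) \in \mathcal{S} \cup \mathcal{S}'$. Since $w$ is equivalent to $z^*$, the subgraphs induced by their descendants are isomorphic and the payoffs are i.i.d., so $V_{n-k}(w)$ has the same distribution as $V_{n-k}(z^*)$; in particular $\mathbb{E}[V_{n-k}(w)] = \mathbb{E}[V_{n-k}(z^*)]$. Applying Lemma~\ref{lem:exp conc 1} to the $(n-k)$-stage game starting at $w$ (noting that $h$ is non-decreasing and $k \le M$, so $(n-k)^2 / h(n-k)$ has the same asymptotic order as $n^2 / h(n)$) gives $|V_{n-k}(w) - \mathbb{E}[V_{n-k}(z^*)]| \le \eps_n$ with probability at least $1 - O(\psi(n, \eps_n))$. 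A union bound over the $O(1)$ pairs in $\mathcal{S} \cup \mathcal{S}'$ together with the $\delta$-transience assumption $\eps_n + \psi(n, \eps_n) = O(n^{-\delta})$ then shows that the event $E$, on which this deviation bound holds simultaneously for all relevant pairs, has probability at least $1 - O(n^{-\delta})$. In addition, Lemma~\ref{lem:differentstages} gives $|\mathbb{E}[V_{n-k}(z^*)] - \mathbb{E}[V_n(z^*)]| \le M/n$.

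Combining these ingredients, on $E$ both sandwich bounds reduce to $|V_n(z) - \mathbb{E}[V_n(z^*)]| \le \eps_n + 2M/n = O(n^{-\delta})$, while on $E^c$ the contribution to $\mathbb{E}[V_n(z)]$ is at most $\mathbb{P}(E^c) = O(n^{-\delta})$ since $V_n \in [0, 1]$. Taking expectations, the lemma follows. The main delicacy I anticipate is that $(\ell, z')$ depends on the opponent's strategy, and hence in principle on the payoffs, so one cannot identify $V_{n-\ell}(z')$ with a single fixed random variable distributed as $V_n(z^*)$; this is precisely what the union bound over the finite set $\mathcal{S} \cup \mathcal{S}'$ is designed to overcome, and its boundedness relies crucially on the bounded out-degrees of $\Gamma$ and the uniform constant $M$ from weak transitivity.
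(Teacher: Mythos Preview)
Your proposal is correct and follows essentially the same approach as the paper: both arguments use the weak transitivity strategy to reach a $z^*$-equivalent state within $M$ stages, invoke the concentration bound of Lemma~\ref{lem:exp conc 1} together with a union bound over the finitely many (at most $(M+1)\Delta^M$) reachable equivalent states, and then appeal to Lemma~\ref{lem:differentstages} to absorb the $O(M/n)$ discrepancy in horizon. The only cosmetic difference is that the paper fixes the horizon shift to $M$ and converts $V_{n-M}(w)$ to $V_n(w)$ via Lemma~\ref{lem:differentstages} \emph{before} taking expectations, whereas you keep the variable pair $(\ell,z')$ and compare $\mathbb{E}[V_{n-k}(z^*)]$ to $\mathbb{E}[V_n(z^*)]$ at the end; your treatment of the dependence of $(\ell,z')$ on $\tau$ via the finite set $\mathcal{S}$ is arguably a bit more explicit than the paper's.
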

\begin{proof}
    Denote by $E$ the set of states $z\in Z^{(M)}$ that are equivalent to $z^*$. 
    By Definition~\ref{def:wt}, independently of the moves of Player 2, $E \neq \emptyset$, and Player 1 can ensure that the token is at a state in $E$ after an even number of $\ell\le M$ stages. Hence, using Lemma~\ref{lem:differentstages}, we have
    \begin{equation}\label{eq:mineq}
        nV_n
            \ge (n-M)\min_{z\in E} V_{n-M}(z)
            \ge (n-M)\min_{z\in E}(V_n(z)-M/n)
            \ge \min_{z\in E} nV_n(z) - 2M \,.
    \end{equation}
    Now, we bound from below the expectation of the right-hand side. 
    Let $\Delta$ be the maximum out-degree of $\Gamma$. 
    Then, $|E|\le |Z^{(M)}|\le 1+\Delta+\ldots+\Delta^M\le (M+1)\Delta^M$ together with the choice of $\eps_n$ from Definition~\ref{def1} imply that
    \begin{equation}\label{eq:minexp}
        \begin{split}
            \mathbb E\left[\min_{z\in E} V_n(z)\right]
                &\ge (\mathbb E[V_n(z^*)] - \eps_n) (1-\mathbb P(\exists z\in E: V_n(z)\le \mathbb E[V_n(z)]-\eps_n))\\
                &\ge (\mathbb E[V_n(z^*)] - \eps_n) (1-(M+1)\Delta^M \psi(n, \eps_n)) = \mathbb E[V_n(z^*)] - O(n^{-\delta}) \,,
        \end{split}
    \end{equation}
    where the second inequality comes from a union bound and the last equality is implied by the fact that $\eps_n+(M+1)\Delta^M\psi(n, \eps_n) = O(n^{-\delta})$. 
    Thus, taking expectations on both sides of~\eqref{eq:mineq} and using~\eqref{eq:minexp} shows that 
    \begin{equation}\label{eq:UB}
        \mathbb E[V_n]\ge \mathbb E[V_n(z^*)] - O(n^{-\delta}+2M/n) 
            = \mathbb E[V_n(z^*)] - O(n^{-\delta}) \,.
    \end{equation}
    
    Similarly, Player 2 can ensure that the token reaches a state in $E$ after an even number of $\ell\le M$ stages. 
    Hence,
    \begin{equation}\label{eq:maxeq}
        nV_n\le (n-M)\max_{z\in E} V_{n-M}(z)+M
            \le \max_{z\in E} nV_n(z)+M \,.
    \end{equation}
    At the same time, similarly to~\eqref{eq:minexp}, $\mathbb E\left[\max_{z\in E} V_n(z)\right]$ is bounded from above by
    \begin{equation*}
        (\mathbb E[V_n(z^*)] + \eps_n) (1-\mathbb P(\exists z\in E: V_n(z)
            \ge \mathbb E[V_n(z)]+\eps_n)) + \mathbb P(\exists z\in E: V_n(z)\ge \mathbb E[V_n(z)]+\eps_n) \,,
    \end{equation*}
    which is at most $\mathbb E[V_n(z^*)] + (\eps_n+(M+1)\Delta^M\psi(n, \eps_n)) = \mathbb E[V_n(z^*)] + O(n^{-\delta})$. Combining this with~\eqref{eq:maxeq} shows that $\mathbb E[V_n]\le \mathbb E[V_n(z^*)]+O(n^{-\delta})$, and together with the upper bound in~\eqref{eq:UB} this finishes the proof.
\end{proof}

Next, we show that the expected value of $V_n$ converges as $n\to \infty$.

\begin{lemma}\label{lem:conv exp 1}
    There is a constant $v_{\infty}$ independent of the initial state such that $|\EE[V_n] - v_{\infty}| = O(n^{-\delta})$ as $n\to \infty$.
\end{lemma}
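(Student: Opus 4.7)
The plan is to show that $u_n \defas \EE[V_n(z^*)]$ is approximately additive with error of order $k^{1-\delta}$ at scale $k = n+m$ (in both directions), deduce convergence $u_n \to v_\infty$ via Lemma~\ref{lem:subadd}, obtain the polynomial rate $O(n^{-\delta})$ by iterating a doubling estimate, and finally transfer the conclusion to an arbitrary initial state using Lemma~\ref{lem:z_0 = z^*}.

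\textbf{Almost-additivity via composite strategies.} Fix $n$ and $m \in [n/2, 2n]$ and set $k \defas n+m$. For the subadditive direction, consider the following composite Player~2 strategy in the $k$-stage game from $z^*$: play optimally for the first $n$ stages; then deploy the weak-transitivity strategy to reach some $z'' \equiv z^*$ within $\ell \leq M$ additional stages; then play optimally for the remaining $m - \ell$ stages starting from $z''$. For any Player~1 response, using Lemma~\ref{lem:differentstages} to pass from $(m-\ell) V_{m-\ell}(z'')$ to $m V_m(z'')$, the total payoff is at most
\[
    n V_n(z^*) + m V_m(z'') + O(M).
\]
Since $z''$ necessarily lies in $E^*_k \defas \{z \in Z^{(n+M)}(z^*) : z \equiv z^*\}$, we have $V_m(z'') \leq \max_{z \in E^*_k} V_m(z)$. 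Taking expectations (using $\EE[V_m(z)] = u_m$ for every $z \equiv z^*$) and applying Lemma~\ref{lem:exp conc 1} together with a union bound over $|E^*_k| \leq \max_{z}|Z^{(2k)}(z)|$ vertices at threshold of order $\eps_k = O(k^{-\delta})$ (inflated by a constant factor so that the union-bound tail $|E^*_k| \cdot \exp(-\cdot)$ stays bounded by $\psi(k, \eps_k) = O(k^{-\delta})$), we obtain
\[
    k u_k \leq n u_n + m u_m + C k^{1-\delta}.
\]
A symmetric argument in which Player~1 uses the analogous composite strategy yields the reverse inequality $k u_k \geq n u_n + m u_m - C k^{1-\delta}$.

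\textbf{Convergence, rate, and transfer.} Setting $f(j) \defas j u_j$ and $\phi(j) \defas C j^{1-\delta}$ (increasing, with $\sum_j \phi(j)/j^2 < \infty$), Lemma~\ref{lem:subadd} gives $u_n \to v_\infty$ for some $v_\infty \in [0, 1]$. Specializing the two-sided bounds to $n = m$ yields $|u_{2n} - u_n| \leq C' n^{-\delta}$; iterating,
\[
    |u_n - u_{2^K n}| \leq C' \sum_{j=0}^{K-1} (2^j n)^{-\delta} \leq \frac{C' n^{-\delta}}{1 - 2^{-\delta}},
\]
and letting $K \to \infty$ gives $|u_n - v_\infty| = O(n^{-\delta})$. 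Combined with $|\EE[V_n] - u_n| = O(n^{-\delta})$ from Lemma~\ref{lem:z_0 = z^*}, the triangle inequality completes the proof.

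\textbf{Main obstacle.} The delicate step is bounding $\EE[\max_{z \in E^*_k} V_m(z)]$, because $z''$ depends on Player~1's play and $|E^*_k|$ may grow with $k$. The $\delta$-transient assumption is calibrated precisely so that, after modestly inflating the concentration threshold to order $k^{-\delta}$, the union bound over $O(|Z^{(2k)}|)$ tails remains polynomially small.
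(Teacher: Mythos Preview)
Your proof is correct and follows the same overall skeleton as the paper: derive approximate additivity of $n\,\EE[V_n(z^*)]$ via a composite ``play optimally on the first leg, restart on the second leg'' strategy, control the second-leg value by concentration (Lemma~\ref{lem:exp conc 1}) plus a union bound sized by the $\delta$-transient hypothesis, invoke Lemma~\ref{lem:subadd}, extract the rate by the doubling telescope, and finally transfer to an arbitrary initial state via Lemma~\ref{lem:z_0 = z^*}.

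The one genuine difference is how the restart is handled. The paper lets the first leg end at an \emph{arbitrary} state in $Z^{(2n)}$, takes the minimum of $V_n(z)$ over all such $z$, and uses Lemma~\ref{lem:z_0 = z^*} inside the argument to align every $\EE[V_n(z)]$ with $\EE[V_n(z^*)]$ up to $O(n^{-\delta})$. You instead insert a short weak-transitivity manoeuvre after the first leg so that the second leg begins at a state \emph{equivalent} to $z^*$, whence the relevant expectations are exactly $u_m$ and Lemma~\ref{lem:z_0 = z^*} is only needed at the very end. Your route is slightly more self-contained at the subadditivity step; the paper's route avoids the extra $O(M)$ navigation bookkeeping. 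Both yield the same $O(k^{1-\delta})$ additive error. One small point to make explicit in your write-up: your union bound mixes the set-size scale $k$ with the concentration scale $m$ (the tail from Lemma~\ref{lem:exp conc 1} for $V_m$ involves $h(m)$, not $h(k)$); this is harmless because $h$ is nondecreasing and $k\le 3m$, which is exactly what your ``inflate by a constant factor'' remark is absorbing, but it is worth saying so.
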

\begin{proof}
    By Lemma~\ref{lem:z_0 = z^*}, it is sufficient to show the lemma assuming $z_0 = z^*$.
    First, we show that $\EE[V_n]$ converges to a limit $v_{\infty}\in \mathbb R$ as $n\to \infty$. 
    By Lemma~\ref{lem:exp conc 1} and a union bound, for all $t \ge 0$,
    \begin{equation}\label{eq:exists z}
    \begin{split}
        \PP(\exists z \in Z^{(2n)}, |V_n(z) - \EE[V_n(z)]| \ge t)
        &\le \sum_{z \in Z^{(2n)}} \PP(|V_n(z) - \EE[V_n(z)]| \ge t)\\
        &\le 2 \exp \left(-\frac{t^2 n^2}{2h(n)} \right) \max_{z\in Z} |Z^{(2n)}(z)| = 2\psi(n, t) \,.
    \end{split}
    \end{equation}
        
    By definition of $\delta$-transient game, there exists $(\eps_n)_{n \in \NN}$ such that $\eps_n + \psi(n, \eps_n) = O(n^{-\delta})$. 
    Denote by $E$ the set of vertices in $Z^{(2n)}$ that are equivalent to~$z^*$.
    Now, Lemma~\ref{lem:z_0 = z^*} implies that there is a constant $K'>0$ such that, for every $n\ge 1$, $|\mathbb E[V_n(z)]-\mathbb E[V_n]|\le K'n^{-\delta}$. 
    Combining this with~\eqref{eq:exists z}, we get that
    \begin{align*}
        \PP \left( \min_{z \in Z^{(2n)}} V_n(z) 
            \le \EE[V_n] - \eps_n - K'n^{-\delta} \right) 
            &\le \PP \left( \min_{z \in Z^{(2n)}} |V_n(z) - \EE[V_n(z)]|\ge \eps_n \right) \\
            &\le \PP(\exists z \in E, |V_n(z) - \EE[V_n(z)]| \ge \eps_n) \\
            &\le 2\psi(n, \eps_n) = O(n^{-\delta}) \,.
    \end{align*}
    \noindent
    In particular, it follows directly that
    \begin{align*}
        \EE \left[ \min_{z \in Z^{(2n)}} V_n(z) \right] 
            &\ge \left( \EE[V_n] - \eps_n - K'n^{-\delta} \right) \, \PP \left(\min_{z \in Z^{(2n)}} V_n(z) \ge \EE[V_n] - \eps_n - K'n^{-\delta} \right) \\
            &\ge \left( \EE[V_n] - \eps_n - K'n^{-\delta} \right) \, (1-2\psi(n, \eps_n))
            \ge \EE[V_n] - 2(\psi(n, \eps_n) + \eps_n) - K'n^{-\delta} \,.
    \end{align*}
    
    Now, fix an integer $m \in [1,2n]$ and consider the $(m+n)$-stage game. 
    Suppose that Player $1$ plays according to an optimal strategy for the $m$-stage game up to stage $m$ and, once the $m$-stage game terminates at a state $z_m$, continues to play according to an optimal strategy for the subsequent $n$-stage game. 
    Note that $z_m\in Z^{(2n)}$, so the above strategy of Player $1$ for the first $m+n$ steps guarantees a gain of $\tfrac{m}{m+n}V_m + \tfrac{n}{m+n}\min_{z \in Z^{(2n)}} V_n(z)$. Thus,
    \begin{equation}\label{eq:mn}
        \begin{split}
            (m + n) \EE[V_{m + n}] 
                &\ge m \EE[V_m] + n \EE\left[\min_{z \in Z^{(2n)}} V_n(z)\right] \\
                &\ge m \EE[V_m] + n \EE[V_n] - 2n(\psi(n, \eps_n) + \eps_n) - K'n^{1-\delta} \,.
        \end{split}
    \end{equation}
    
    Since $\psi(n, \eps_n) + \eps_n = O(n^{-\delta})$, there is a constant $K'' > 0$ such that, for all $n \ge 1$,
    \[
        2n(\psi(n, \eps_n) + \eps_n)+K'n^{1-\delta}
            \le 2K''n^{1-\delta} \,.
    \]
    Thus, using Lemma~\ref{lem:subadd} with $f \colon n \mapsto -n \EE[V_n]$ and $\phi \colon n\mapsto 2K'' n^{1 - \delta}$ (note that $\phi$ is increasing and $\sum_{n\ge 1} \phi(n) / n^2 = 2K'' \sum_{n\ge 1} 1 / n^{1 + \delta} < \infty$) implies that $\EE[V_n]$ converges to a limit $v_{\infty}\in \mathbb R\cup \{\infty\}$ as $n\to \infty$.
    Note that $v_{\infty}$ is in $[0,1]$ since this is the support of all payoff variables.
    
    Finally, using~\eqref{eq:mn} with $m = n$, for every $n\ge 1$, we have that
    \[
        \EE[V_{2n}] 
            \ge \EE[V_n] - (\psi(n, \eps_n) + \eps_n) - \frac{K'n^{-\delta}}{2}
            \ge \EE[V_n] - K'' n^{-\delta} \,.
    \] 
    In particular, for all integers $\ell, n \ge 1$, iterating the above observation for $n, 2n, \dots, 2^{\ell-1}n$ gives that
    \begin{equation}\label{eq:telescope}
        \EE[V_{2^\ell n}]
            \ge \EE[V_n] - K'' n^{-\delta} \sum_{j=0}^{\ell-1} 2^{-\delta j}
            \ge \EE[V_n] - \frac{K''}{1-2^{-\delta}}n^{-\delta} \,.
    \end{equation}
    Taking $\ell \to \infty$, we conclude that $v_\infty \ge \EE[V_n] - O(n^{-\delta})$. A similar reasoning exchanging Player $1$ with Player $2$ shows that $v_\infty \le \EE[V_n] + O(n^{-\delta})$ and concludes the proof of the lemma.
\end{proof}

Finally, we are ready to prove Theorem~\ref{thm:1}.

\begin{proof}[Proof of Theorem~\ref{thm:1}]
    Fix an arbitrary $\eps > 0$. 
    By Lemma~\ref{lem:conv exp 1}, there is a constant $K > 0$ such that $|v_\infty - \EE[V_n]|\le K n^{-\delta}$ for all $n\ge 1$, independently of the initial state. Combining this with the triangle inequality and Lemma~\ref{lem:exp conc 1} shows that, for every $t\ge 0$,
    \begin{align*}
        \PP( |V_n - v_\infty| \ge t + K n^{-\delta} ) 
            &\le \PP( |V_n - \EE[V_n]| \ge t + K n^{-\delta} - |\EE[V_n] - v_\infty|) \\
            &\le \PP( |V_n - \EE[V_n]| \ge t) \le 2\exp \left( \frac{-t^2 n^2}{2h(n)} \right) \,,
    \end{align*}
    which is the desired result.
\end{proof}

\section{\texorpdfstring{Directed games on trees: proof of Theorem~\ref{thm:2}}{}}
\label{sec:thm2}

The first lemma in this section bootstraps upon the conclusion of Lemma~\ref{lem:exp conc 1} (which still holds in this setting), thus deriving superexponential concentration for the value of the $n$-stage game. Below, $\log$ stands for the natural logarithm.

\begin{lemma}\label{lem:bootstrap}
    Fix $\delta\in (0,1/2)$ and $t\ge n^{-\delta}$. For every integer $n\ge 1$ and even integer $k\in [2,n]$ such that 
    \begin{equation}\label{ineqcondition}
        k \log d + 2 \log 2 \leq t^2 (n-k),
    \end{equation}
    we have 
    \begin{align*}
        \PP (nV_n - (n-k)\EE[V_{n - k}]  
            \ge\;\;\, (n-k) t + k) 
            \le \exp\left(-\frac{d^{k/2}}{6}\right) \,, \\
        \PP (nV_n - (n-k)\EE[V_{n - k}]  
            \le -(n-k)t-k) 
            \le \exp\left(-\frac{d^{k/2}}{6}\right) \,.
    \end{align*}
\end{lemma}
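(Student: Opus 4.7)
My plan is to reduce the event in question to a tail event for the minmax value of a truncated $k$-stage game, and then exploit the $d$-ary tree's recursive structure together with the independence of disjoint subtrees to turn the single-exponential tail from Lemma~\ref{lem:exp conc 1} into a double-exponential one. Define $M_k$ to be the value of the $k$-stage game on $T$ with no intermediate payoffs and terminal payoff $V_{n-k}(z)$ at every leaf $z\in Z_k$. Because the subtrees of $T$ rooted at distinct $z\in Z_k$ are vertex-disjoint, the variables $(V_{n-k}(z))_{z\in Z_k}$ are i.i.d.\ with common mean $\EE[V_{n-k}]$. Since the cumulative intermediate payoff $\sum_{i=1}^k G_{z_i}$ lies in $[0,k]$ (as a function of the unique path in $T$ from the root to $z_k$), the monotonicity and translation invariance of the minmax operator give $(n-k)\,M_k \le nV_n \le (n-k)\,M_k + k$, and hence $\{nV_n - (n-k)\EE[V_{n-k}] \ge (n-k)t + k\} \subseteq \{M_k \ge \EE[V_{n-k}]+t\}$, with the analogous inclusion for the lower tail.

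To bound the tail of $M_k$, I would propagate the tail of $p \defas \PP(V_{n-k}(z)\ge \EE[V_{n-k}]+t)$ up the tree via backward induction. Since Player~$1$ moves first and $k$ is even, the $k$-stage game has max at the even levels and min at the odd ones; in particular, level $k-1$ is a min and the root is a max. Lemma~\ref{lem:exp conc 1} (with $h(n-k)=n-k$ on the tree) gives $p\le \exp(-t^2(n-k)/2)$. Writing $q_j$ for an upper bound on the probability that the subgame value at any vertex of level $j$ exceeds $\EE[V_{n-k}]+t$, the independence of sibling subtrees yields $q_j \le q_{j+1}^d$ at min levels and $q_j \le d\,q_{j+1}$ at max levels. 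Iterating $k/2$ such (min, max) pairs starting from $q_k=p$ produces
\[
\PP(M_k \ge \EE[V_{n-k}]+t) \;\le\; d^{(d^{k/2}-1)/(d-1)}\, p^{d^{k/2}}.
\]

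Finally, I would plug in the hypothesis. The assumption $k\log d + 2\log 2 \le t^2(n-k)$ rearranges to $p \le \tfrac{1}{2}d^{-k/2}$, and substitution yields
\[
d^{(d^{k/2}-1)/(d-1)}\, p^{d^{k/2}} \;\le\; 2^{-d^{k/2}}\, d^{\,d^{k/2}(1/(d-1)-k/2)} \;\le\; 2^{-d^{k/2}} \;\le\; \exp(-d^{k/2}/6),
\]
using $1/(d-1)\le 1\le k/2$ for $d\ge 2$, $k\ge 2$, and $\log 2 > 1/6$. The lower-tail bound follows by the symmetric argument, swapping the roles of max and min throughout the recursion.

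The main obstacle is the backward induction step: the tree structure is what makes the subgame values at sibling vertices independent, allowing the min operations to raise the tail probability to the $d$-th power at each of the $k/2$ min levels, which is precisely what produces the exponent $d^{k/2}$. The condition $k\log d + 2\log 2 \le t^2(n-k)$ is the exact threshold ensuring that the multiplicative factors coming from the max levels do not overpower the repeated exponentiation from the min levels.
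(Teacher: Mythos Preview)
Your argument is correct and takes a genuinely different route from the paper. The paper proves the lemma by a counting-and-avoidance argument: it defines the set $S_k$ of ``bad'' leaves $z\in Z_k$ with $V_{n-k}(z)\ge \EE[V_{n-k}]+t$, observes that $|S_k|\sim\mathrm{Bin}(d^k,q)$ with $q\le\exp(-t^2(n-k)/2)$, applies a Chernoff bound to show $\PP(|S_k|\ge d^{k/2})\le\exp(-d^{k/2}/6)$, and then uses a pigeonhole strategy (at each of her $k/2$ turns, Player~2 moves to the child with the fewest bad descendants) to guarantee the token misses $S_k$ whenever $|S_k|<d^{k/2}$. Your approach instead performs a direct backward induction on the min--max tree: independence of sibling subtrees lets each min level raise the tail probability to the $d$-th power, while each max level costs only a factor $d$ by union bound, yielding the closed form $d^{(d^{k/2}-1)/(d-1)}p^{d^{k/2}}$. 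Your method is arguably more elementary (no Chernoff bound is needed) and even delivers a sharper constant $\log 2$ in place of $1/6$; the paper's argument, by contrast, produces an explicit near-optimal strategy for the trailing player and isolates the combinatorial mechanism (avoiding a small set of bad leaves) more transparently.
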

\begin{proof}
    First of all, since $T$ is a transitive graph, for all $n\ge 1$, $(V_n(z))_{z\in Z}$ have the same distribution.	For every even integer $k\in [n]$, denote
    \[
    	S_k \defas \{ z \in Z_k : V_{n-k}(z)-\EE[V_{n-k}] \geq   t \} \,.
    \]
    In other words, $S_k$ is the set of vertices that could be reached from $z_0$ after $k$ stages, for which the value of the $(n-k)$-stage game starting at $z$ is greater than or equal to $\EE[V_{n-k}] +t$.
	
	Define the event $\mathcal{E}_k \defas \{|S_k| \geq  d^{k/2}\}$. 
	We provide an upper bound for $\PP(\mathcal{E}_k)$.
	Since the random variables $(V_{n-k}(z))_{z \in Z_k}$ are i.i.d., we have that $|S_k|$ follows a binomial distribution $\mathrm{Bin}(d^k, q)$ where $q \defas \PP(V_{n-k} \geq \EE[V_{n-k}] + t)$.
	Consequently, by Lemma~\ref{lem:exp conc 1} (where $h(n) = n$ is the transient speed of the family of partitions $(\Pi_z)_{z\in Z}$ where, for all $z\in Z$ and $k\ge 2$, $Z_k(z)$ contains all descendants of $z$ at distance $k$), $|S_k|$ is stochastically dominated by a binomial random variable $\mathrm{Bin}(d^k, \tilde{q})$ where $\tilde{q} = \exp(-t^2(n-k)/2)$. 
    In particular,  
	\begin{equation*}
    	\PP(\mathcal{E}_k)
        	\le \PP \left( \mathrm{Bin}(d^k, \tilde{q}) \ge d^{k/2} \right) \,.
	\end{equation*} 
	The random variable $\mathrm{Bin}(d^k, \tilde{q})$ has mean $\mu \defas d^k \tilde{q}$. 
    We define 
    \[
        \xi 
            \defas \frac{d^{k/2}}{\mu} - 1 
            = \exp\left(\frac{t^2(n-k) - k\log(d)}{2}\right) - 1
            \ge 1 \,,
    \]
	where the last inequality comes from~\eqref{ineqcondition}. 
    Since $d^{k/2} = (1 + \xi) \mu$, we have that 
	\[
    	\PP( \mathrm{Bin}(d^k, \tilde{q}) \ge d^{k/2} )
            = \PP( \mathrm{Bin}(d^k, \tilde{q}) 
        	\ge (1+\xi)\mu ) \,.
	\]
	Therefore, since $\xi \geq 1$ (so $3\xi\ge 2+\xi$), by Chernoff's bound,
	\begin{align*}
        \PP \left( \mathrm{Bin}(d^k, \tilde{q}) 
        \ge d^{k/2} \right) 
            \le \exp\left(-\frac{\xi^2\mu}{2+\xi}\right)
            \le \exp\left(-\frac{\xi\mu}{3}\right) 
            = \exp\left(-\frac{d^{k/2}}{3}  \left(1-d^{k/2}\tilde{q}\right) \right) \,.
	\end{align*}
	Since $\xi = 1/(d^{k/2}\tilde{q}) - 1\ge 1$, we have that $1-d^{k/2}\tilde{q} \geq 1/2$, which finally yields 
	\begin{equation}\label{finalchernoff}
        \PP(\mathcal{E}_k)  
            \leq \exp\left(-\frac{d^{k/2}}{6} \right) \,.
	\end{equation}
    At the same time, on the event $|S_k| < d^{k/2}$ (that is, $\overline{\mathcal{E}_k}$), Player $2$ can ensure that the token avoids ending up in $S_k$ after $k$ stages. 
    Indeed, at each of the $k/2\in \mathbb N$ turns corresponding to decisions of Player $2$, by the pigeonhole principle, Player $2$ can always move the token to a vertex having at most a $(1/d)$-fraction of all remaining elements in $S_k$ among its descendants. 
    Since Player $2$ has $k/2$ turns and $d^{-k/2} |S_k| < 1$, Player $2$ can safely avoid the set $S_k$ at stage $k$. 

	Let us condition on the event $\overline{\mathcal{E}_k}$. 
    Then, Player $2$ can guarantee that the sum of the payoffs over the last $n-k$ stages is strictly smaller than $(n-k)(\EE[V_{n-k}] + t)$. 
    Moreover, the sum of the first $k$ payoffs is at most $k$. 
    Consequently, Player $2$ can guarantee that, after $n$ stages, the global mean payoff is strictly smaller than $k/n + (n-k)(\EE[V_{n-k}] + t)/n$, in other words, 
    \begin{equation}\label{eq:strict}
        nV_n < (n-k) \EE[V_{n - k}] + (n-k) t + k \,.
    \end{equation}
    In particular, using~\eqref{finalchernoff} implies that
	\[
        \PP\left( nV_n - (n-k) \EE[V_{n - k}] \geq (n-k) t + k \right)
            \leq \PP(|S_k| \geq d^{k/2}) 
            = \PP(\mathcal{E}_k)
            \le \exp\left(-\frac{d^{k/2}}{6}  \right) \,.
    \]
	
	A similar reasoning for Player $1$ (using the sets $\tilde{S}_k \defas \{ z \in Z_k : V_{n-k}(z)-\EE[V_{n-k}] \leq -t \}$ instead of $S_k$ and replacing~\eqref{eq:strict} with $nV_n > (n-k)\mathbb E[V_{n-k}] - (n-k)t$) yields
    \[
        \PP\left(nV_n - (n-k)\EE[V_{n - k}] \leq -(n-k)t\right)
            \leq \exp\left(-\frac{d^{k/2}}{6} \right) \,,
    \]
    which implies the second statement.
    Note that the additional $-k$ in it is introduced for reasons of symmetry only.
\end{proof}

Next, we show that the expected value of the $n$-stage game converges rapidly as $n$ grows to infinity. 

\begin{lemma}\label{lem:conv exp 2}
	There exists $v_\infty \in \RR$ such that, for every $\delta\in (0,1/2)$, we have $|\EE[V_n] - v_\infty| = O(n^{-\delta})$ as $n\to \infty$.
\end{lemma}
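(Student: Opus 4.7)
Our plan is to mirror the strategy of Lemma~\ref{lem:conv exp 1}. The only substantive change is that the concentration input now comes from Lemma~\ref{lem:bootstrap} rather than from Lemma~\ref{lem:exp conc 1}: since after $m$ stages the token lies in the set $Z_m$ of size $d^m$, any union bound over reachable states suffers from an exponential blow-up that the polynomial-in-$n$ tail of Lemma~\ref{lem:exp conc 1} cannot absorb, while the double-exponential tail of Lemma~\ref{lem:bootstrap} is precisely tailored to do so.

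The first step is to control $\EE[\min_{z\in Z_m} V_n(z)]$ and $\EE[\max_{z\in Z_m} V_n(z)]$ for $m\in[n/2,2n]$. Taking expectations in Lemma~\ref{lem:differentstages} gives $|n\EE[V_n]-(n-k)\EE[V_{n-k}]|\le k$, so Lemma~\ref{lem:bootstrap} together with the identical distribution of $(V_n(z))_{z\in Z}$ yields
\[
    \PP\bigl(|V_n(z)-\EE[V_n]|\ge t+2k/n\bigr)\le 2\exp\bigl(-d^{k/2}/6\bigr)
\]
for every $z\in Z$, every $t\ge n^{-\delta}$, and every even $k$ satisfying~\eqref{ineqcondition}. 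I would set $t\defas n^{-\delta}$ and pick $k$ as an even integer of order $\log n$ large enough so that $d^{k/2}/6\ge 2n\log d+\log n$. A union bound over the $|Z_m|\le d^{2n}$ vertices then gives $\PP(\min_{z\in Z_m} V_n(z)\le \EE[V_n]-O(n^{-\delta}))=O(n^{-\delta})$; since $V_n\in[0,1]$, this translates into $\EE[\min_{z\in Z_m} V_n(z)]\ge \EE[V_n]-O(n^{-\delta})$, and the symmetric bound for the maximum is analogous. The feasibility condition~\eqref{ineqcondition} reduces to $O(\log n)\le n^{-2\delta}(n-O(\log n))$, which holds for all large $n$ because $\delta<1/2$.

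With this control at hand, I would reuse the strategy-splitting from Lemma~\ref{lem:conv exp 1}: letting Player~$1$ play optimally for the $m$-stage game (ending at some $z_m\in Z_m$) and then optimally for the $n$-stage game starting from $z_m$ gives $(m+n)V_{m+n}\ge mV_m+n\min_{z\in Z_m} V_n(z)$, whence
\[
    (m+n)\EE[V_{m+n}]\ge m\EE[V_m]+n\EE[V_n]-K\, n^{1-\delta}
\]
for all $m\in[n/2,2n]$ and some constant $K$. Applying Lemma~\ref{lem:subadd} to $f\colon n\mapsto -n\EE[V_n]$ with $\phi\colon N\mapsto KN^{1-\delta}$ (which is increasing and satisfies $\sum_N\phi(N)/N^2<\infty$) produces the limit $v_\infty\in[0,1]$. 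Specializing to $m=n$ gives $\EE[V_{2n}]\ge \EE[V_n]-O(n^{-\delta})$, and the dyadic telescoping as in~\eqref{eq:telescope} yields $v_\infty\ge \EE[V_n]-O(n^{-\delta})$; the matching upper bound follows by exchanging the roles of Player~$1$ and Player~$2$. The main technical difficulty lies in the joint calibration of $t$ and $k$: $k$ must be large enough for the double-exponential to dominate the $d^m$ union bound, yet small enough that the error $t+2k/n$ stays $O(n^{-\delta})$ while respecting~\eqref{ineqcondition}; the hypothesis $\delta<1/2$ is precisely what makes this triple balance feasible.
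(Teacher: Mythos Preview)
Your proposal is correct and follows the same skeleton as the paper's proof (Lemma~\ref{lem:bootstrap} $\to$ union bound over $Z_m$ $\to$ strategy splitting $\to$ Lemma~\ref{lem:subadd} $\to$ dyadic telescoping). Two remarks.

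\emph{Calibration of $k$.} Your choice $k=O(\log n)$ is actually more economical than the paper's. The paper takes $k=2\lfloor n^{1-2\delta'}/(4\log d)\rfloor$ for an auxiliary $\delta'>\delta$, so that $d^{k/2}$ is double-exponentially larger than needed to beat the union bound, and then has to check separately that $k/n=o(\eps_n)$; this is why a second parameter $\delta'$ enters. With your logarithmic $k$, the error $2k/n=O((\log n)/n)$ is automatically $o(n^{-\delta})$, condition~\eqref{ineqcondition} reduces to $O(\log n)\le n^{1-2\delta}$, and no auxiliary exponent is required. Both calibrations work; yours is the cleaner one.

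\emph{Parity of $m$.} There is one genuine omission: the splitting $(m+n)V_{m+n}\ge mV_m+n\min_{z\in Z_m}V_n(z)$ is only valid for \emph{even} $m$, since $V_n(z)$ is defined with Player~1 to move first and after an odd number of stages it is Player~2's turn. Lemma~\ref{lem:subadd} requires the inequality for all integers $m\in[n/2,2n]$, so you must treat odd $m$ separately. The paper does this by passing to $m+1$ and absorbing the discrepancy via $|(m{+}n)\EE[V_{m+n}]-(m{+}n{+}1)\EE[V_{m+n+1}]|\le 1$ and $(m{+}1)\EE[V_{m+1}]\ge m\EE[V_m]$ from Lemma~\ref{lem:differentstages}. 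You should add this step.
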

\begin{proof}
	Fix $\delta' \in (0,1/2)$ and $t\ge n^{-\delta'}$. For each $n \geq 1$, we set $k = k(n) \defas 2 \big\lfloor n^{1-2\delta'}/4\log d \big\rfloor$. 
    Then, $k\log d + 2 \log 2 \leq t^2 (n-k)$ for all large $n$. 
    For every even integer $m \in [n/2,2n]$ and large $n$, we have 
	\begin{align*}
        \PP \left( \min_{z \in Z_{m}} nV_n(z) \leq (n-k)(\EE[V_{n-k}]-t) - k \right) 
            &\le \sum_{z \in Z_{m}} \PP \left(nV_n(z) \leq (n-k)(\EE[V_{n-k}]-t) - k \right) \\
            &\le d^m \exp\left(-\frac{d^{k/2}}{6} \right) \\
            &\le \exp \left(2n\log d -\frac{d^{\lfloor n^{1-2\delta'}/4\log d\rfloor}}{6} \right) \,,	
    \end{align*}
     where the first inequality comes from a union bound and the second inequality comes from \Cref{lem:bootstrap}. 
     Fix $\delta \in (0, \delta')$ and define, for all $n \geq 1$,
    \[
        \eps_n \defas n^{-\delta}
        \quad\text{and}\quad
        \psi(n) \defas \exp \left(2n\log d -d^{\lfloor n^{1-2\delta'}/4\log d\rfloor}/6 \right) \,.
    \] 
    For large $n$ and every even integer $m \in [n/2, 2n]$, we have
    \begin{align}
    	\EE \left[ \min_{z \in Z_{m}} V_n(z) \right]
            &\ge \left(\frac{n-k}{n}(\EE[V_{n-k}]-\eps_n) - \frac{k}{n}\right) \PP \left(\min_{z \in Z_{m}} nV_n(z) > (n-k)(\EE[V_{n-k}]-\eps_n) - k \right)\nonumber \\
            &\ge \left(\frac{n-k}{n}(\EE[V_{n-k}]-\eps_n) - \frac{k}{n}\right) (1- \psi(n))\nonumber \\
            &\ge \left(\EE[V_{n-k}] - \frac{k}{n}(1+\EE[V_{n-k}]) - \eps_n \right) (1 - \psi(n))\nonumber\\
            &\ge \left(\EE[V_{n}] - \frac{3k}{n} - \eps_n \right) (1 - \psi(n)) \ge \EE[V_n] - (\psi(n) + 2\eps_n) \,, \label{eq:minE}
	\end{align}
	where in the fourth inequality we used that $\mathbb E[V_n]\le \mathbb E[V_{n-k}]+k/n$ by Lemma \ref{lem:differentstages} and $1+\mathbb E[V_{n-k}]\le 2$, and the last inequality is valid for large $n$ because $k/n = o(\eps_n)$.
	
    Consider integers $n \geq 1$ and even $m\in [n/2,2n]$. In the $(n+m)$-stage game, Player $1$ can play according to an optimal strategy for the $m$-stage game starting at $z_0$, and then play according to an optimal strategy for the $n$-stage game starting from the state $z$ reached after $m$ stages.
    This guarantees that $(m+n)V_{m+n}\ge m V_m + \min_{z \in Z_m} n V_n(z)$. Taking expectations on both sides and using~\eqref{eq:minE} yields
	\begin{align*}
    	(m + n) \EE[V_{m + n}] 
        \ge m \EE[V_m] + n \EE \left[ \min_{z \in Z_m} V_n(z) \right]
        \ge m \EE[V_m]+ n\EE[V_n] - n(\psi(n) + 2\eps_n) \,.
	\end{align*}

	We find a similar inequality for odd $m\in [n/2,2n]$. 
    In this case, $m+1$ is even and also in $[n/2,2n]$. 
    Then, the previous inequality applied to $m+1$ and $n$ yields
    \begin{equation}\label{eq:m+1}
        (m+n+1) \EE[V_{m+n+1}] 
            \ge (m+1) \EE[V_{m+1}]+ n\EE[V_n] - n(\psi(n) + 2\eps_n) \,.
    \end{equation}
    However, 
	\[
        (m+n) \EE[V_{m+n}] 
            \geq (m+n+ 1) \EE[V_{m+n+1}]-1
        \quad\text{and}\quad 
        (m+1) \EE[V_{m+1}] 
            \geq m \EE[V_{m}] \,,
    \]
    which combined with~\eqref{eq:m+1} gives  
	\[
        (m + n) \EE[V_{m+n}] \geq m \EE[V_{m}]+ n\EE[V_n] - n(\psi(n) + 2\eps_n)-1 \,.
    \]
    To sum things up, for large $n$ and $m\in [n/2,2n]$,
	\begin{equation}\label{expectineq}
        (m + n) \EE[V_{m  +n}] \geq m \EE[V_{m}]+ n\EE[V_n] - n(\psi(n) + 2\eps_n) - 1 \,.
	\end{equation}
	Recall that there is a constant $K' > 0$ such that, for all $n\ge 1$, $n(\psi(n) + 2\eps_n)+1 \le K' n^{1-\delta}$. We define $\phi(n) \defas K' n^{1-\delta}$ and deduce from~\eqref{expectineq} that 
	\[
	   (m + n) \EE[V_{m  +n}] \geq m \EE[V_{m}]+ n\EE[V_n] - \phi(n+m) \,.
	\]
	Moreover, $\phi$ is increasing and verifies  $\sum_{n\ge 1} \phi(n)/n^2 < \infty$. 
    Consequently, Lemma \ref{lem:subadd} applied to the function $f \colon n\in \mathbb N\mapsto -n\mathbb E[V_n]$ implies that that $\EE[V_n]$ converges to a limit $v_{\infty}\in \mathbb R\cup \{\infty\}$ as $n\to \infty$.
    Note that $v_{\infty}\in [0,1]$ since $V_n\in [0,1]$ for all $n\ge 1$. 
		
	Finally, using~\eqref{expectineq} with $m = n$ and a telescopic summation shows that the inequality~\eqref{eq:telescope} still. 
    In particular, we conclude that $v_\infty\ge \EE[V_n] - O(n^{-\delta})$. 
    A similar reasoning replacing Player $1$ with Player $2$ shows that $v_\infty\le \EE[V_n] + O(n^{-\delta})$ and concludes the proof of the lemma.
\end{proof}

We are now ready to prove Theorem~\ref{thm:2}.

\begin{proof}[Proof of Theorem~\ref{thm:2}]
	Fix $t\ge n^{-\delta}$ and let $K'$ be a constant such that $|\mathbb E[V_n] - v_{\infty}|\le K' n^{-\delta}$ for all large $n$. Using that, for all $n$ and $k\le n$, we have $|nV_n - (n-k)V_{n-k}|\le k$, and fixing $k = 2\lceil \tfrac{t^2n}{4\log d}\rceil$ (which satisfies~\eqref{ineqcondition}), we get
	\begin{align*}
    	&\PP(|V_n - v_\infty|\ge t+2t^2+K' n^{-\delta}) \\
            &\qquad \le \PP\left(\left|V_n - \frac{n-k}{n}\EE[V_{n-k}]\right|\ge t+2t^2+K'n^{-\delta} - \left|\frac{n-k}{n}\EE[V_{n-k}] - \EE[V_n]\right| - |\EE[V_n]-v_\infty|\right) \\
            &\qquad \le \PP\left(\left|V_n - \frac{n-k}{n}\EE[V_{n-k}]\right|\ge t + t^2 \right) \\
            &\qquad \le \PP(|nV_n - (n-k)\EE[V_{n - k}]| \ge (n-k)t + k) \\
            &\qquad \le \exp\left(-\frac{d^{\lfloor k/2\rfloor}}{6} \right) \le \exp\left(-\frac{d^{t^2n/(4\log d)}}{6} \right) 
            = \exp\left(-\frac{1}{6}\exp\left(\frac{t^2n}{4}\right)\right) \,,
    \end{align*}
    where the first inequality comes from the triangle inequality, the second inequality comes from the definition of $K'$ and the fact that $|nV_n-(n-k)V_{n-k}|\le k\le nt^2$, and the third inequality once again uses the fact that $k\le nt^2$. 

    Finally, choosing $K\ge K'$ sufficiently large ensures that, first, the upper bound shown above holds for all $n\ge 1$ (and not only for large $n$), and second, the upper bound holds for all $t\ge 0$, which finishes the proof.
\end{proof}

\section{\texorpdfstring{Oriented directed games: proof of Proposition~\ref{prop:oriented}}{}}\label{sec:prop}

We present a simple and self-contained proof of Proposition~\ref{prop:oriented}.

\begin{proof}
    First, by density of the rational vectors in $\RR^d$ and rescaling, we may assume that $u \in \mathbb Z^d$ is such that the greatest common divisor of its coordinates is 1. 
    Then, for every integer $i \ge 1$ and initial state $z_0 = z$, defining  $Z_{2i}(z) \defas \{ w \in Z : w \cdot u = z\cdot u+i \}$, $Z_{2i+1}(z) \defas \{ w \in Z : w\cdot u = z \cdot u - i \}$, and $Z_1(z) \defas \{ w \in Z \setminus \{ z \} : w \cdot u = z \cdot u \}$ shows that the game is directed. 
    Indeed, $(Z_i(z))_{i\ge 0}$ form a partition of $Z$ for all $z\in Z$, and each of them could be visited at most once by the token.

    Now, fix $\delta \in (0, 1/2)$ and $z_0 = z \in Z$.
    To see that the game is $\delta$-transient, set $r = \max_{ u v \in E(\Gamma) } \| v - u \|_2$.
    After $n$ steps of the process, the position $z_n$ of the token satisfies $\| z_n - z \|_2 \le nr$, and by the Cauchy-Schwarz inequality, 
    \[
        |(z_n - z) \cdot u | 
            \le \| z_n - z \|_2 \cdot \| u \|_2 
            \le \lceil n r \cdot \| u \|_2 \rceil \safed M 
            = M(n) \,.
    \]
    In particular, $Z^{(n)}(z)$ is contained in the ball with radius $M$ around $z$, which itself is contained in $Z_{[2M+1]}(z)$, so the transient speed of the process satisfies $h(n)\le 2M(n)+1$ for all $n\ge 1$. 
    Finally, take $\delta \in (0, 1/2)$ and set $\eps_n \defas n^{-\delta}$. 
    Then,
    \begin{align*}
        \psi(n, \eps_n) 
            &= \exp\left( -\frac{\eps_n^2 n^2}{2h(n)} \right) \max_{z\in Z}|Z^{(2n)}(z)|\\
            &\leq \exp\left( -\frac{\eps_n^2 n}{6 r\cdot \| u \|_2} \right) (2 n r \cdot \| u \|_2 + 1)^d \\
            &= \exp\left( -\frac{n^{1 - 2\delta}}{6 r\cdot \| u \|_2} \right) (2n r \cdot \| u \|_2 + 1)^d 
            = O(n^{-\delta}) \,.
    \end{align*}
    Hence, for all $\delta \in (0, 1/2)$, $\eps_n + \psi(n, \eps_n) = O(n^{-\delta})$, and therefore, the game is $\delta$-transient. 
\end{proof}

\section*{Acknowledgments}

This work was supported by the French Agence Nationale de la Recherche (ANR) under references ANR-21-CE40-0020 (CONVERGENCE project) and ANR-20-CE40-0002 (GrHyDy), and by Fondecyt grant 1220174. This collaboration was mainly conducted during a 1-year visit of Bruno Ziliotto to the Center for Mathematical Modeling (CMM) at University of Chile in 2023, under the IRL program of CNRS.

\bibliographystyle{plain}
\bibliography{references}

\begin{thebibliography}{10}

\bibitem{ARY21}
N.~Alon, K.~Rudov, and L.~Yariv.
\newblock Dominance solvability in random games.
\newblock {\em arXiv preprint arXiv:2105.10743}, 2021.

\bibitem{ACSZ21}
B.~Amiet, A.~Collevecchio, M.~Scarsini, and Z.~Zhong.
\newblock Pure {N}ash equilibria and best-response dynamics in random games.
\newblock {\em Mathematics of Operations Research}, 46(4):1552--1572, 2021.

\bibitem{ADH17}
A.~Auffinger, M.~Damron, and J.~Hanson.
\newblock {\em 50 years of first-passage percolation}, volume~68.
\newblock American Mathematical Society, 2017.

\bibitem{BKPR23}
D.~Bhasin, S.~Karmakar, M.~Podder, and S.~Roy.
\newblock On a class of {PCA} with size-3 neighborhood and their applications
  in percolation games.
\newblock {\em Electronic Journal of Probability}, 28:1--60, 2023.

\bibitem{BE52}
N.~Govert de~Bruijn and P.~Erd\H{o}s.
\newblock Some linear and some quadratic recursion formulas {II}.
\newblock {\em Indagationes Mathematicae (Proceedings)}, 55:152--163, 1952.

\bibitem{DP95}
D.-Z. Du and P.~M. Pardalos.
\newblock {\em Minimax and applications}, volume~4.
\newblock Springer Science \& Business Media, 1995.

\bibitem{FPS23}
J.~Flesch, A.~Predtetchinski, and V.~Suomala.
\newblock Random perfect information games.
\newblock {\em Mathematics of Operations Research}, 48(2):708--727, 2023.

\bibitem{garnier2021PercolationGames}
G.~Garnier and B.~Ziliotto.
\newblock Percolation games.
\newblock {\em Mathematics of Operations Research}, 48(4):2156--2166, 2023.

\bibitem{garrec19}
T.~Garrec.
\newblock Communicating zero-sum product stochastic games.
\newblock {\em Journal of Mathematical Analysis and Applications},
  477(1):60--84, 2019.

\bibitem{HJMP21}
Torsten Heinrich, Yoojin Jang, Luca Mungo, Marco Pangallo, Alex Scott, Bassel
  Tarbush, and Samuel Wiese.
\newblock Best-response dynamics, playing sequences, and convergence to
  equilibrium in random games.
\newblock {\em International Journal of Game Theory}, 52(3):703--735, 2023.

\bibitem{HMM19}
A.~E. Holroyd, I.~Marcovici, and J.~B. Martin.
\newblock Percolation games, probabilistic cellular automata, and the hard-core
  model.
\newblock {\em Probability Theory and Related Fields}, 174:1187--1217, 2019.

\bibitem{JLR00}
S.~Janson, T.~{\L}uczak, and A.~Ruci\'{n}ski.
\newblock {\em Random {{Graphs}}}.
\newblock {John Wiley \& Sons, Inc.}, 2000.

\bibitem{LR20}
R.~Laraki and J.~Renault.
\newblock Acyclic gambling games.
\newblock {\em Mathematics of Operations Research}, 45(4):1237--1257, 2020.

\bibitem{LS15}
R.~Laraki and S.~Sorin.
\newblock Advances in zero-sum dynamic games.
\newblock In {\em Handbook of game theory with economic applications},
  volume~4, pages 27--93. Elsevier, 2015.

\bibitem{SZ16}
E.~Solan and B.~Ziliotto.
\newblock Stochastic games with signals.
\newblock {\em Advances in Dynamic and Evolutionary Games: Theory,
  Applications, and Numerical Methods}, pages 77--94, 2016.

\bibitem{Neu28}
J.~von Neumann.
\newblock Zur {T}heorie der {G}esellschaftsspiele.
\newblock {\em Mathematische {A}nnalen}, 100(1):295--320, 1928.

\bibitem{ziliotto13}
B.~Ziliotto.
\newblock Zero-sum repeated games: counterexamples to the existence of the
  asymptotic value and the conjecture {$\mathrm{maxmin} = \lim v (n)$}.
\newblock {\em The Annals of Probability}, 44(2):1107--1133, 2016.

\bibitem{ziliotto21}
B.~Ziliotto.
\newblock Mertens conjectures in absorbing games with incomplete information.
\newblock {\em arXiv preprint arXiv:2106.09405, to appear in The Annals of
  Applied Probability}, 2021.

\end{thebibliography}

\end{document}